\crefname{hypothesis}{Hypothesis}{Hypotheses}
\crefname{question}{Question}{Question}
\newcommand{\cato}{\operatorname{C}}
\newcommand{\Coker}{\operatorname{Coker}}
\newcommand{\cmod}{\Psi}
\newcommand{\depth}{\operatorname{depth}}
\newcommand{\edim}{\operatorname{edim}}
\newcommand{\End}{\operatorname{End}}
\newcommand{\ext}{\operatorname{Ext}}
\newcommand{\fdim}{\operatorname{flat\,dim}}
\newcommand{\fitt}{\operatorname{Fitt}}
\newcommand{\fm}{{\mathfrak m}}
\newcommand{\fp}{{\mathfrak p}}
\newcommand{\hh}{\operatorname{H}}
\newcommand{\Hom}{\operatorname{Hom}}
\newcommand{\Ker}{\operatorname{Ker}}
\newcommand{\length}{\operatorname{length}}
\newcommand{\mbb}[1]{\mathbb{#1}}
\newcommand{\mco}{\mathcal O}
\newcommand{\pos}[1]{[\![{#1}]\!]}
\newcommand{\pdim}{\operatorname{proj\,dim}}
\newcommand{\rank}{\operatorname{rank}}
\newcommand{\Spec}{\operatorname{Spec}}
\newcommand{\tors}{\operatorname{tors}}
\newcommand{\tfree}[1]{{#1}^{\operatorname{tf}}}
\begin{document}

\title{\Large Commutative algebra inspired by modularity lifting}
    
    \author{Srikanth B. Iyengar\thanks{Department of Mathematics, University of Utah, Salt Lake City, UT, (\email{srikanth.b.iyengar@utah.edu}).}}

\date{\today}

\maketitle







\begin{abstract} 
This article gives an overview of some recent results in commutative algebra that are inspired by the work of Wiles, Taylor and Wiles, Diamond, Lenstra and others on the modularity of elliptic curves. 
\end{abstract}

\subsubsection*{Key words.}  congruence module, complete intersection ring, derived action, freeness criterion, Koszul complex, modularity lifting, Wiles defect.

\subsubsection*{AMS subject classifications.} 13C10, 13D02 (Primary), 11F80 (Secondary)

\section{Introduction.}
\label{sec:intro}
This paper may well have been titled: Commutative algebra inspired by Wiles, because all the mathematical developments described here have their origins in the proof due to Wiles~\cite{Wiles:1995}, and Taylor and Wiles~\cite{Taylor/Wiles:1995}, of the Shimura, Taniyama, Weil conjecture on the modularity of semistable elliptic curves over the rational numbers. This article is on the pure commutative algebraic results and research directions that have emerged from it, with a focus on my own joint work with  Brochard, Khare, and Manning over the past few years. I only hint at the number theoretic origins and motivations. In the companion article~\cite{Iyengar/Khare/Manning:2025a}, Khare, Manning, and I give an overview also of some of the results in number theory that we have been able to prove, or expect to prove, based on the new commutative algebra. As such, there is little overlap between the two articles, and I hope that, together, they convey a sense of this beautiful, and still evolving, mathematical landscape.

There are many excellent sources that give an overview of the work of Wiles' proof; see, for example,  \cite[Chapter 1]{Cornell/Silverman/Stevens:1997} and \cite{Darmon/Diamond/Taylor:1997}. Roughly speaking, using Mazur’s~\cite{Mazur:1997} deformation theory of Galois representations, Wiles reduces the Shimura, Taniyama, Weil conjecture to proving that certain surjective maps $\varphi_\Sigma\colon R_\Sigma \to \mbb T_\Sigma$ of (noetherian, commutative) local rings are isomorphisms. The ring $R_\Sigma$ parametrizes Galois deformations subject to constraints determined by a finite collection of primes $\Sigma$ and $\mbb{T}_\Sigma$ is a Hecke algebra subject to the same constraints, which acts on a certain space of modular forms, $M_\Sigma$. To prove that $\varphi_\Sigma$ is an isomorphism, Wiles first proves that $M_\Sigma$ is free as a $\mbb{T}_\Sigma$-module---this is a delicate argument—and also that $\mbb{T}_\Sigma$ is a complete intersection. He then uses these properties to prove that $\varphi_\Sigma$ is an isomorphism; as a corollary, he deduces the freeness of $M_\Sigma$ also as an $R_\Sigma$-module. It is Diamond’s insight~\cite{Diamond:1997} that one can prove the latter statement directly and that this implies $\varphi_\Sigma$ is an isomorphism of complete intersections, and also that $M_\Sigma$ is free over $\mbb{T}_\Sigma$.

The argument that $M_\Sigma$ is free over $R_\Sigma$ has two distinct parts. In the \emph{minimal case}, when $\Sigma=\varnothing$, the technique of ``patching"
introduced by Taylor and Wiles~\cite{Taylor/Wiles:1995} and adapted by Diamond~\cite{Diamond:1997}, yields a noetherian local ring $R_{\infty}$, a finitely generated $R_{\infty}$-module $M_{\infty}$, and surjective maps 
\[
R_{\infty} \twoheadrightarrow R_{\varnothing} \quad \text{and}\quad M_{\infty} \twoheadrightarrow M_{\varnothing}
\]
where the map on the left is a homomorphism of rings, say with kernel $I$, and $M_{\varnothing}\cong M_{\infty}/I M_{\infty}$. The point of the patching process is that $R_{\infty}$ is a regular local ring and $M_{\infty}$ is a maximal Cohen-Macaulay module over it. Thus the Auslander-Buchsbaum formula implies that $M_{\infty}$ is a free $R_{\infty}$-module, and hence that $M_{\varnothing}$ is a free $R_{\varnothing}$-module, as desired; see Proposition~\ref{pr:diamond}. It turns out that $I$ is generated by a regular sequence, so as a byproduct of the patching argument one gets that $R_{\varnothing}$ is a complete intersection; this is special to Wiles' context and important for the passage from the minimal to the general, non-minimal, case. This passage is facilitated by a new criterion, discovered by Diamond~\cite{Diamond:1997} (building on work of Wiles and Lenstra~\cite{Lenstra:1995}) that detects when a finitely generated module over a local ring is free \textbf{and} the ring is a complete intersection; see Theorem~\ref{th:diamond}.

Commutative algebra thus enters in both steps of the proof. As such, the result that Wiles and Diamond wanted (the Auslander-Buchsbaum formula) had been available since the late 1960's.  In contrast, to tackle the non-minimal case, Wiles and Diamond  needed to develop new commutative algebra, and in particular  a numerical criterion  (in terms that are meaningful in number theory) for detecting freeness of modules over local rings. Nevertheless, each step raises new questions, and leads to new results, in commutative algebra. This is what is described below.

The patching method is (or so it seems to me) specific to the number theoretic context. In Section~\ref{se:desmit}, I discuss the commutative algebra that is partly motivated by a desire to simplify, and even entirely do away with, the patching argument of Taylor and Wiles to establish modularity in the minimal case. It starts with a conjecture  of de Smit (now a theorem of Brochard's) that, in hindsight, can be interpreted as a converse to a well-known (to commutative algebraists!) result of Nagata on a special kind of surjective complete intersection map\footnote{However, the hypothesis of the de Smit conjecture provides the seed of a patching datum that develops into full fledged patching. The patching argument has become an indispensable tool in the number theorist's toolkit: it introduces a useful rigidity and robustness, allowing one to, for example, invert the prime $p$ when proving modularity of $p$-adic Galois representations, which the hypothesis of de Smit's conjecture does not provide.}.

The patching method of Taylor and Wiles has been, and continues to be, developed extensively, and is now available also in the non-minimal case, thanks to work of Kisin~\cite{Kisin:2009a}, and this has often allowed one to bypass the numerical criterion. However, although this has led to much more general modularity lifting theorems, among other applications, it has been at the expense of some refined torsion information; all this is explained in the introduction to \cite{Iyengar/Khare/Manning:2024a}. See also the discussion after Theorem~\ref{th:tate-wiebe}. The main contribution of \emph{op.~cit.} is to establish a numerical criterion that applies in greater generality, so that it can be applied to rings and modules \emph{after patching}. 

In a way, the key contribution of \cite{Iyengar/Khare/Manning:2024a} is the definition of a \emph{congruence module} attached to a module over a ring. This is recalled in Section~\ref{se:wilesdefect}, which is centered around  the numerical criterion of Diamond, Lenstra, and Wiles, and its generalization developed in \cite{Iyengar/Khare/Manning:2024a, Iyengar/Khare/Manning/Urban:2024}. In \cite{Iyengar/Khare/Manning:2025a},  the congruence module itself, and a closely related notion of the congruence ideal, take center stage---for they have taken an interesting life of their own---and we trace their origins in number theory and (potential) new applications.

\section{The de Smit conjecture and beyond.}
\label{se:desmit}
In what follows, I  assume familiarity with basic concepts from commutative algebra; much of what is needed can be found in Chapter 1 of the book by Bruns and Herzog~\cite{Bruns/Herzog:1998a}, and I take that  as the standard reference. The homological algebra notions that come up are already present, for most parts, in the book by Cartan and Eilenberg~\cite{Cartan/Eilenberg:1956a}. Given a local ring $A$, I write $\fm_A$ for its maximal ideal and $k_A$ for its residue field $A/\fm_A$. The \emph{embedding dimension of A}, denoted $\edim A$, is the $k_A$-rank of the Zariski tangent space $\fm_A/\fm_A^2$; equivalently, the minimal number of generators of the ideal $\fm_A$.

 The starting point of this section is the result below, abstracted from~\cite[Section~2]{Diamond:1997}. 

\begin{proposition}
\label{pr:diamond}
Let $\varphi\colon A\to B$ be a local homomorphism of noetherian local rings, with $A$ regular. Let $N$ be a nonzero $B$-module that is finitely generated as an $A$-module and satisfies $\pdim_AN\le \edim A-\edim B$, then $N$ is free as $B$-module and the local ring $B$ is regular as well.
\end{proposition}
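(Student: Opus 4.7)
The plan is a two-fold application of the Auslander--Buchsbaum formula, sandwiched by a comparison of depths across $\varphi$. First, since $A$ is regular, $\depth A = \dim A = \edim A$, and every nonzero finitely generated $A$-module has finite projective dimension. So Auslander--Buchsbaum gives
$$\depth_A N \;=\; \edim A - \pdim_A N \;\ge\; \edim B,$$
where the inequality is the standing hypothesis on $\pdim_A N$.

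Next I would compare depth over $A$ with depth over $B$. Because $\varphi$ is local, any $N$-regular sequence in $\fm_A$ pushes forward through $\varphi$ to a sequence in $\fm_B$ that acts identically on $N$ and remains regular (here one uses that $N$ is finitely generated over $B$ via any $A$-generating set, so Nakayama's lemma rules out $\fm_B N = N$). This gives $\depth_A N \le \depth_B N$, and the standard inequalities $\depth_B N \le \dim B \le \edim B$ then complete a four-term chain
$$\edim B \;\le\; \depth_A N \;\le\; \depth_B N \;\le\; \dim B \;\le\; \edim B$$
that must consist entirely of equalities.

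From $\dim B = \edim B$ one reads off that $B$ is regular; from $\depth_B N = \dim B$ one reads off that $N$ is a maximal Cohen--Macaulay $B$-module. Applying the Auslander--Buchsbaum formula a second time, now over the regular local ring $B$ (so $\pdim_B N$ is automatically finite), gives
$$\pdim_B N \;=\; \depth B - \depth_B N \;=\; \dim B - \dim B \;=\; 0,$$
so $N$ is free as a $B$-module.

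The only delicate step in this plan is the comparison $\depth_A N \le \depth_B N$, which requires simultaneously that $\varphi$ be local (so regular sequences in $\fm_A$ land in $\fm_B$) and that $N$ be finitely generated over $B$ (so Nakayama applies). Everything else is a direct squeeze from the Auslander--Buchsbaum formula, and I expect no serious obstacle beyond keeping track of these hypotheses.
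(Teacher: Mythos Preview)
Your proof is correct and follows essentially the same approach as the paper: both use the Auslander--Buchsbaum formula over $A$ to obtain $\edim B \le \depth_A N$, string this together with the standard inequalities $\depth_A N \le \depth_B N \le \dim B \le \edim B$, and then read off regularity of $B$ and freeness of $N$ (the latter via a second application of Auslander--Buchsbaum). You have simply spelled out the ``standard'' inequalities that the paper leaves implicit.
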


\begin{proof}
By the Auslander-Buchsbaum formula~\cite[Theorem~1.3.3]{Bruns/Herzog:1998a} the hypotheses on $N$ implies the first inequality below
\[
\edim B \le \depth_A N \le \depth_B N \le \dim B\le\edim B\,.
\]
The other inequalities are standard; see ~\cite[\S1.2]{Bruns/Herzog:1998a}. Thus we conclude that $\edim B=\dim B$ and hence that $B$ is regular; see \cite[\S2.2]{Bruns/Herzog:1998a}. Moreover we get also that $\depth_BN=\dim B$ which, given that $B$ is regular, implies that $N$ is free as a $B$-module, again by the Auslander-Buchsbaum formula.
\end{proof}

Subsequent to Diamond's work, Bart de Smit conjectured that   if $A\to B$ is a local homomorphism of artinian local rings with $\edim A=\edim B$, then any $B$-module that is flat as an $A$-module is also  flat as a $B$-module. This strengthens Proposition~\ref{pr:diamond} in the case where $A$ and $B$ have the same embedding dimension, and allows one to dispense with the patching technique in the work of Wiles; see \cite[\S3]{Brochard:2017}.  In \cite[Theorem 1.1]{Brochard:2017}, Brochard verifies de Smit's conjecture by proving the stronger statement that if  $A,B$ are noetherian local rings with $\edim A\ge \edim B$  and $N$ is a  finitely generated $B$-module that is  flat over $A$,  then $N$ is flat over $B$. In   \cite{Brochard/Iyengar/Khare:2023a}, Brochard, Khare, and I generalize, with a simpler proof, Brochard's result as follows.

\begin{theorem}
\label{th:de-smit-general}
Suppose  $\varphi\colon A\to B$ is a local homomorphism of noetherian local rings, and $N$ is a nonzero finitely generated $B$-module whose flat dimension over $A$ satisfies $\fdim_AN\le \edim A-\edim B$, then $N$ is free as a $B$-module and the map $\varphi$ is  complete intersection with $\edim A - \dim A= \edim B -  \dim B$. \qed
\end{theorem}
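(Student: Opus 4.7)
The plan is to reduce to a Cohen factorization of $\widehat\varphi$ and then mimic Proposition~\ref{pr:diamond}, with the intermediate ring of the factorization playing the role that ``$A$ regular'' plays there. First I would pass to completions: $\edim$, $\dim$, finite flat dimension of finitely generated modules, freeness, and the complete intersection property of $\varphi$ are all invariant or faithfully flat-detected under completion, so nothing is lost. After completion, the Avramov-Foxby-Herzog theory provides a minimal Cohen factorization $\widehat A\xrightarrow{\iota}A'\xrightarrow{\pi}\widehat B$, where $\iota$ is flat with regular closed fiber of some dimension $d$, $\pi$ is surjective, and $\edim A'=\edim \widehat A+d$, $\dim A'=\dim\widehat A+d$. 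The CI defect $\edim - \dim$ is therefore preserved from $A$ to $A'$, and the standard change-of-rings inequality $\fdim_{A'}N\le\fdim_A N+d$ upgrades the hypothesis to $\pdim_{A'} N \le \edim A' - \edim B$ (flat and projective dimensions agree for a finitely generated module of finite flat dimension).

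The heart of the argument is then to show that $\pi$ is complete intersection. Auslander-Buchsbaum over $A'$ gives $\pdim_{A'}N + \depth_{A'}N = \depth A'$, while the equality $\depth_{A'}N = \depth_B N \le \dim B$ holds because an element of $\fm_{A'}$ is $N$-regular iff its image in $\fm_B$ is. On the other side, the new intersection theorem applied to $N$ over $A'$ yields $\dim A'-\dim B \le \dim A' - \dim(A'/\ann_{A'}N) \le \pdim_{A'}N$, using that $\ann_{A'}N\supseteq\ker\pi$. Chaining these against the hypothesis bound forces the string of inequalities to collapse: $A'$ is Cohen-Macaulay, $\depth_B N = \dim B$, and $\dim A' - \dim B = \edim A' - \edim B$. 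The minimality of the Cohen factorization ensures $\ker\pi$ is generated by at most $\edim A' - \edim B$ elements, so these generators form a regular sequence of length equal to the height of $\ker\pi$; equivalently, $\pi$, and hence $\varphi$, is complete intersection.

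Once $\pi$ is a regular embedding, the two remaining conclusions follow formally. The equality $\edim A - \dim A = \edim B - \dim B$ is immediate from $\edim A' - \dim A' = \edim A - \dim A$ (preservation along $\iota$) combined with $\edim A' - \edim B = \dim A' - \dim B$ (established above). For freeness, the classical identity $P^{A'}_N(t) = P^B_N(t)\cdot(1+t)^c$ with $c=\edim A'-\edim B$, arising from the Koszul resolution of $B$ over $A'$, together with $\deg P^{A'}_N(t)=\pdim_{A'}N\le c$, forces $P^B_N(t)$ to be a constant, so $N$ is $B$-free.

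The step I expect to be the principal obstacle is the collapsing argument in the second paragraph: translating the codimension bound from the new intersection theorem into the structural conclusion that $\ker\pi$ is actually generated by a regular sequence of the expected length, rather than merely a system of parameters with additional relations, requires the minimality of the Cohen factorization in an essential way, and this bookkeeping is the technical heart of the proof.
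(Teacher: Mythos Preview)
Your reduction to a surjection via a minimal Cohen factorization is sound and matches what the paper sketches. The gap is exactly where you flagged it: the ``collapse'' in the second paragraph does not go through. From Auslander--Buchsbaum and the New Intersection Theorem you obtain only
\[
\dim A' - \dim B \;\le\; \pdim_{A'} N \;\le\; \edim A' - \edim B
\quad\text{and}\quad
\depth A' - \dim B \;\le\; \pdim_{A'} N,
\]
and nothing here forces any of these to be equalities. In particular you cannot conclude that $A'$ is Cohen--Macaulay---indeed $A'$ is flat over $A$ with regular closed fiber, so $A'$ is Cohen--Macaulay if and only if $A$ is, and no such hypothesis is in force. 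Nor do these inequalities give $\dim A'-\dim B=\edim A'-\edim B$; you only get $\le$. The companion claim, that minimality of the Cohen factorization bounds the minimal number of generators of $\ker\pi$ by $\edim A'-\edim B$, is also false as stated: for the trivially minimal factorization $A=A'=k[[x,y]]\twoheadrightarrow B=k[[x,y]]/(x^2,y^2)$ one has $\edim A'-\edim B=0$ while $\ker\pi$ needs two generators. Minimality controls $\edim A'$ relative to $\edim A$ and $d$, not the generators of $\ker\pi$; that bound must come from the hypothesis on $N$, and your inequality chase does not extract it.

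The paper's route after the reduction is genuinely different: it runs an induction on $c=\edim A-\edim B$, and the engine of the inductive step is Nagata's theorem (Theorem~\ref{th:nagata}). One picks an element $a\in\ker\pi$ with $a\notin\fm_A^2$, passes to $A/(a)$, and uses Nagata's equality $\pdim_A N=\pdim_{A/(a)}N+1$ to decrement both $c$ and the projective-dimension bound simultaneously. This peels off generators of $\ker\pi$ one at a time while tracking the embedding/Krull dimension defect---precisely the bookkeeping your one-shot comparison cannot do. Your Poincar\'e-series identity $P^{A'}_N(t)=P^B_N(t)(1+t)^c$ in the last paragraph is essentially Nagata's theorem iterated, so you are reaching for the right tool, but only \emph{after} having assumed the complete-intersection conclusion that still needs proving.
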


Any local homomorphism $\varphi\colon A\to B$ can be factored, at least after completing $B$ at its maximal ideal, as a flat map with regular closed fiber, and a surjective map;  $\varphi$ is said to be \emph{complete intersection} if the kernel of the surjective map can be generated by a regular sequence. In particular,  when $\varphi\colon A\to B$ is surjective, it is a complete intersection if, and only if,  $\Ker\varphi$ can be generated by a regular sequence; see \cite[\S5]{Avramov:1999a} for details. 

 When $\varphi$ is a surjective complete intersection, the condition $\edim A-\dim A=\edim B-\dim B$, appearing in the statement above, is equivalent to the condition that $\Ker \varphi$ can be generated by a regular sequence whose image in  $\fm_A/\fm_A^2$ is a linearly independent set. There is a similar characterization of this property for general maps, in terms of Andr\'e-Quillen homology; see \cite{Brochard/Iyengar/Khare:2023a} for details. 

The  more general  hypothesis  in our theorem,  as opposed to \cite[Theorem 1.1]{Brochard:2017}, gives us the freedom to reduce its proof to the surjective case, using standard results in commutative algebra. This  is instrumental in leading to a proof that is simpler than  the one presented in \cite{Brochard:2017} as we can  apply an induction on $\edim A - \edim B$. A key ingredient in this induction step is the following theorem of Nagata.

\begin{theorem}
\label{th:nagata}
Let $A\to B$ be a surjective complete intersection with $\edim A-\dim A=\edim B-\dim B$. For any finitely generated $B$-module $N$ there is an equality
\[
\pdim_A N = \pdim_B N + \edim A - \edim B\,.
\]
\end{theorem}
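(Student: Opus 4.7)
The plan is to induct on $c := \edim A - \edim B$, which under the hypothesis equals the codimension $\dim A - \dim B$ of $\varphi$. Since $\Ker\varphi$ is generated by a regular sequence $f_1,\dots,f_c$ whose images are linearly independent in $\fm_A/\fm_A^2$, I factor $\varphi$ as a chain of codimension-one surjective complete intersections
\[
A \twoheadrightarrow A/(f_1) \twoheadrightarrow \dots \twoheadrightarrow A/(f_1,\dots,f_c) = B.
\]
At each stage the kernel is principal, generated by an element with nonzero image in the appropriate Zariski tangent space, so the $\edim - \dim$ equality is preserved. Concatenating the one-step formulas yields the general case, so it suffices to treat $c=1$.

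For $c=1$, write $B = A/(f)$ with $f \in \fm_A \setminus \fm_A^2$ an $A$-regular element. My plan is to construct a minimal $A$-free resolution of $N$ directly from a minimal $B$-free resolution $(F_\bullet,\partial_\bullet) \to N$. Lift each $F_i$ to an $A$-free module $\widetilde F_i$ of the same rank, and lift each $\partial_i$ to $\widetilde\partial_i\colon \widetilde F_i \to \widetilde F_{i-1}$ with entries in $\fm_A$. Since $\widetilde\partial_{i-1}\widetilde\partial_i \equiv 0 \pmod f$, there is a unique map $\sigma_i\colon \widetilde F_i \to \widetilde F_{i-2}$ with $\widetilde\partial_{i-1}\widetilde\partial_i = f\sigma_i$, and $\sigma$ is automatically a chain map with respect to $\widetilde\partial$ (obtained by computing $\widetilde\partial^{\,3}$ two ways and cancelling the non-zerodivisor $f$). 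Assemble $(\widetilde F, \widetilde\partial, \sigma)$ into the $A$-complex
\[
G_i := \widetilde F_i \oplus \widetilde F_{i-1}, \qquad D_i(x,y) := \bigl(\widetilde\partial_i x + f y,\; -\sigma_i x - \widetilde\partial_{i-1} y\bigr).
\]
A direct computation gives $D^2 = 0$, and acyclicity of $G_\bullet \to N$ follows by realising $G_\bullet$ as the total complex of the bicomplex that resolves each $\widetilde F_i$-slot using the two-term free $A$-resolution $0 \to A \xrightarrow{f} A \to B \to 0$ of $B$.

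The crux of the argument is to check that $G_\bullet$ is \emph{minimal}, i.e.\ each entry of $D_i$ lies in $\fm_A$. The entries of $\widetilde\partial_i$ and of $f$ lie in $\fm_A$ by construction, so the substantive point is the analogous statement for $\sigma_i$, and here the hypothesis $f \notin \fm_A^2$ is decisive: each entry of $f\sigma_i = \widetilde\partial_{i-1}\widetilde\partial_i$ lies in $\fm_A^2$, and the identity $fA \cap \fm_A^2 = f\fm_A$—which holds precisely when $f$ has nonzero image in $\fm_A/\fm_A^2$—forces each entry of $\sigma_i$ into $\fm_A$. With minimality established, $G_\bullet$ has length $\pdim_B N + 1$, so $\pdim_A N = \pdim_B N + 1$; this construction handles both finite and infinite $\pdim_B N$ uniformly. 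The main technical obstacle I anticipate is the acyclicity verification for $G_\bullet$—elementary but not entirely routine—hinging crucially on the hypothesis that $f$ belongs to a minimal generating set of $\fm_A$.
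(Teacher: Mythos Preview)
Your argument is correct and is essentially the Shamash construction, specialized to a hypersurface, together with the observation that minimality is preserved precisely when $f\notin\fm_A^2$. The paper takes a different, less constructive route: it reduces to showing that $\pdim_AN$ is finite if and only if $\pdim_BN$ is finite, and then reads off the exact equality from the Auslander--Buchsbaum formula (using $\depth_AN=\depth_BN$ and $\depth A-\depth B=c$). Your approach has the virtue of producing the minimal $A$-resolution explicitly and treating the finite and infinite cases uniformly; the paper's approach is shorter once the finiteness equivalence is in hand, but that equivalence is itself the substantive step.

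Two small points. First, your description of $G_\bullet$ as ``the total complex of a bicomplex'' is imprecise: the lifted maps $\widetilde\partial$ do not square to zero over $A$, so the array $(\widetilde F_i \xrightarrow{f} \widetilde F_i)$ with horizontal maps $\widetilde\partial$ is not a bicomplex in the usual sense; what you have is a Cartan--Eilenberg type resolution, or equivalently Shamash's construction, and acyclicity is proved by a short filtration argument on $G_\bullet/fG_\bullet$ together with the long exact sequence coming from $0\to G_\bullet\xrightarrow{f}G_\bullet\to G_\bullet/fG_\bullet\to 0$. Second, your closing remark that acyclicity ``hinges crucially'' on $f\notin\fm_A^2$ is not right: acyclicity requires only that $f$ be a non-zerodivisor; the hypothesis $f\notin\fm_A^2$ is used solely for minimality, via the identity $fA\cap\fm_A^2=f\fm_A$ that you correctly isolate.
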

See \cite[Corollary (27.5)]{Nagata:1962a}, or \cite[Proposition~3.3.5(1)]{Avramov:1998a}, for a proof. The main point is to prove that $\pdim_AN$ is finite if and only if $\pdim_BN$ is finite; the stated equality then follows from the Auslander-Buchsbaum formula.  Thus  Theorem~\ref{th:de-smit-general} can be seen as a converse to Nagata's theorem; keep in mind that flat dimension and projective dimension coincide for finitely generated modules over local rings.  See \cite{Iyengar/Letz/Liu/Pollitz:2022} for a perspective on Nagata's theorem and its converse through the lens of the structure of the derived category as a triangulated category.

\subsection*{Derived actions.}
Calegari and Geraghty \cite{Calegari/Geraghty:2018} extended the patching method  in  \cite{Taylor/Wiles:1995} to situations where one patches complexes rather than modules.  Based on the recasting of this method given in his work with Thorne~\cite{Khare/Thorne:2017}, Khare has proposed an extension of de Smit's conjecture that applies in this context. His question is best articulated in the language of \emph{derived actions} on complexes introduced in~\cite{Khare/Thorne:2017}, and recalled below.

 Let $A$ be a noetherian local ring and $B$ a (commutative) $A$-algebra. By a \emph{derived} $B$-complex $F$ (over $A$) we mean a finite free $A$-complex
\[
F\colonequals 0\longrightarrow F_n\longrightarrow \cdots \longrightarrow F_0\longrightarrow 0
\]
with $\hh_0(F)\ne 0$, equipped with a map of $A$-algebras $B\to \Hom_{\sf K(A)}(F,F)$, the homotopy classes of self-maps of the $A$-complex $F$; here $\sf K(A)$ is the homotopy category of complexes of $A$-modules. One could allow $F$ to be an arbitrary $A$-complex, but the  generality above suffices for the intended applications. Observe that then each  $\hh_i(F)$ is endowed with a $B$-module structure, extending that of $A$. However $F$ itself may not be equivalent to complex over $B$, even up to quasi-isomorphism; see the example below.  Any complex $F$ of $B$-modules is a derived $B$-complex, via the canonical map $B\to \Hom_A(F,F)$ given by multiplication. Here is a more interesting example.

\begin{examples}
   \label{ex:derived-complexes} 
Let $\mathbf{a}\colonequals a_1,\dots,a_n$ be elements in $A$ and $K$ the Koszul complex on $\mathbf{a}$. This is a finite free $A$-complex and for each $a_i$ the multiplication map $a_i\colon K\to K$ is homotopic to $0$; see \cite[Proposition~1.6.5]{Bruns/Herzog:1998a}. Thus the canonical map $A\to \Hom_{\sf K(A)}(K,K)$ factors through the quotient ring $B=A/(\mathbf{a})$ giving $K$ the structure of a derived $B$-complex. However, it is not always possible to realize $K$ as complex of $B$-modules; in that there may not be a complex of $B$-modules that is homotopic (or even quasi-isomorphic) to $K$.

For instance, when the sequence $\mathbf a$ generates the maximal ideal $\fm_A$ of the local ring $A$, the Koszul complex on $\mathbf{a}$ is quasi-isomorphic to a complex over the residue field $A/\fm_A$ of $A$, if and only if $A$ is regular; this is just a reformulation of the statement, due to Auslander and Buchsbaum, and Serre, that a local ring is regular if and only if its residue field has finite projective dimension. See also Theorem~\ref{th:rigid-action}
 below. \end{examples}

As I said earlier, complexes with derived actions come up naturally when one patches complexes rather than modules. Motivated in part by this, Khare posed the following question:

\begin{question}
\label{qu:Shekhar}
Let $A$ be a noetherian local ring and $F\colonequals 0\to F_n\to \cdots \to F_0\to 0$ a finite free $A$-complex with  $\hh_0(F)\ne 0$. If $F$ admits a structure of a derived $B$-complex where $B$ is  noetherian local $A$-algebra such that
\[
n \le  \edim A - \edim B \,,
\]
is then the $B$-module $\hh_{0}(F)$ free?
\end{question}

A positive answer to this question would obviate the need to patch in \cite{Calegari/Geraghty:2018}, by asking for patching data only for $n=1$,  just as de Smit's conjecture, proved in~\cite{Brochard:2017}, obviated the need to patch in the original argument of Taylor and Wiles; see \cite[Section~5]{Brochard/Iyengar/Khare:2025} for details.  Already in \cite{Calegari/Geraghty:2018} one finds a positive answer to Khare's question when the ring $A$ is regular; compare Proposition~\ref{pr:diamond}.  In ~\cite{Brochard/Iyengar/Khare:2025} Brochard, Khare, and I verify that Khare's question has a positive answer under additional hypotheses.

\begin{theorem}
\label{th:khare}
Question~\ref{qu:Shekhar} has a positive answer when  $B/\fm_AB$, the closed fiber of the map $A\to B$, is a complete intersection ring; this applies, in particular, when $A\to B$ is surjective.
\end{theorem}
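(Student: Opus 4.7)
The plan is to reduce to the surjective case via Cohen factorization and then induct on $d=\edim A-\edim B$ in that setting.

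First, complete $A$ and $B$ and apply Cohen factorization to write $\hat\varphi\colon \hat A\to A'\to\hat B$ with the first map flat with regular closed fiber and the second surjective. The base change $F\otimes_A A'$ is a finite free $A'$-complex of length $n$ carrying a derived $\hat B$-action, and the length bound $n\le\edim A-\edim B\le \edim A'-\edim\hat B$ is preserved via $\edim A'=\edim \hat A+\edim(A'/\fm_A A')$. Freeness descends along the faithfully flat map $B\to\hat B$, so we may assume $\varphi\colon A\to B$ is surjective, in which case the closed-fiber hypothesis is automatic (the closed fiber of a surjection is the residue field).

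Now assume $\varphi$ is surjective with kernel $I$, and induct on $d$. The base case $d=0$ forces $n=0$, so $F=F_0$ is a free $A$-module; then $I$ must annihilate $F_0$ because $F_0$ is a derived $B$-complex, and since $F_0$ is free over $A$ this forces $I=0$, so $A=B$ and $F_0$ is trivially free over $B$. For the inductive step, pick $x\in I$ whose image in $\fm_A/\fm_A^2$ is nonzero; the surjection $A/(x)\to B$ has $\edim(A/(x))-\edim B=d-1$. If $n\le d-1$, then $F\otimes_A A/(x)$ is a length-$n$ derived $B$-complex over $A/(x)$ with $\hh_0(F\otimes_A A/(x))\cong \hh_0(F)$ (since $x$ acts as zero on $\hh_0(F)$), and the inductive hypothesis gives the result.

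The remaining case $n=d$ is the heart of the proof. Using the null-homotopy of $x$ on $F$, which exists in $\sf K(A)$ because $F$ is a derived $B$-complex and $x\in I$, one constructs a derived $B$-complex $\widetilde F$ over $A/(x)$ of length $n-1$ with $\hh_0(\widetilde F)\cong \hh_0(F)$, and applies the inductive hypothesis to $\widetilde F$. This shortening construction is the main obstacle: naive candidates such as the quotient $F\otimes_A A/(x)$ preserve the length $n$ and introduce extra homology in higher degrees, so the construction must exploit the null-homotopy of $x$ on $F$ in a more refined way, in the spirit of the Shamash--Eisenbud construction for complete intersections; it is here that a careful analysis of the interaction between the null-homotopy and the differential of $F$ is required.
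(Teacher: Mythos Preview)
Your outline has the right shape—reduce to the surjective case, then induct on $d=\edim A-\edim B$—but there are two genuine gaps, and the first one is fatal as written.

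\textbf{The reduction proves too much.} You pass to a Cohen factorization $\hat A\to A'\to \hat B$ and assert that $F\otimes_A A'$ carries a derived $\hat B$-action. But all you actually obtain from base change is an $A'$-algebra map $B\otimes_A A'\to \Hom_{\sf K(A')}(F\otimes_A A',F\otimes_A A')$. For this to factor through $\hat B$ you would need every element of $\ker(B\otimes_A A'\to\hat B)$ to act null-homotopically on $F\otimes_A A'$; concretely, whenever $b\in B$ and $a'\in A'$ have the same image in $\hat B$, the derived action of $b$ must agree (up to homotopy) with scalar multiplication by $a'$. Nothing in the setup forces this. More tellingly, your reduction never invokes the hypothesis that the closed fiber $B/\fm_A B$ is a complete intersection: if the reduction worked as you describe, combined with the surjective case it would answer Question~\ref{qu:Shekhar} in full, whereas the paper explicitly says the general question remains open. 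So the complete intersection hypothesis must enter precisely in bridging the gap between a derived $B$-action and a derived action of a ring mapping \emph{onto} $\hat B$, and you have not indicated how.

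\textbf{The case $n=d$ is the whole proof.} You correctly identify that shortening $F$ by one over $A/(x)$ using the null-homotopy of $x$ is the crux, but you stop at ``in the spirit of Shamash--Eisenbud.'' The Shamash construction produces an (infinite, eventually periodic) $A/(x)$-free resolution from an $A$-free resolution together with a system of higher homotopies; it does not, without further work, hand you a finite free $A/(x)$-complex of length $n-1$ with the same $\hh_0$ and a compatible derived $B$-action. The paper records that the argument in \cite{Brochard/Iyengar/Khare:2025} in fact yields the stronger conclusion $\rank_A F_i=b\binom{n}{i}$ when $F$ is minimal, which suggests the actual mechanism is closer to recognising $F$ as (homotopy equivalent to) a Koszul complex tensored with a free module, rather than a generic length-reduction step. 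Either way, this is where the substance lies, and it is absent from your proposal.
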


In \cite[Theorem~6.1]{Brochard/Iyengar/Khare:2025} we prove more than what I stated above: When in addition the differential on $F$ satisfies $d(F)\subseteq \fm_AF$ (and this can always be arranged) there are also equalities
\[
\rank_AF_i =b\cdot \binom ni \quad\text{for $0\le i\le n$}
\]
where $b$ is the minimal number of generators of the $B$-module $\hh_0(F)$. The binomial coefficients on the right are precisely the ranks of the free modules appearing in a Koszul complex on $n$-elements. This is reminiscent of phenomena  well-known to commutative algebraists: Invariants attached to Koszul complexes often give good (typically, lower) bounds for general finite free complexes. Consider, for instance, the New Intersection Theorem~\cite{Roberts:1991}, the rank conjectures of Buchsbaum and Eisenbud~\cite{Buchsbaum/Eisenbud:1977}, and the Total Rank Conjecture, now a theorem proved by Walker~\cite{Walker:2017}, and VandeBogert and Walker~\cite{VandeBogert/Walker:2025}.
 
 It seems plausible that for a positive answer to Question~\ref{qu:Shekhar} one needs to input more information from number theory, by way of more structure on $F$. A natural strengthening of the hypothesis that $F$ admits a derived action of $B$ is that the action can be upgraded to a strict $B$-action, but this turns out to be too stringent. 

 \begin{theorem}
     \label{th:rigid-action}
     In the context of Question~\ref{qu:Shekhar}, if $F$ is quasi-isomorphic, in the derived category of $A$-modules,  to a complex of $B$-modules, then $\hh_i(F)=0$ for $i\ne 0$, the $B$-module $\hh_0(F)$ is free, and the map $A\to B$ is a  complete intersection with $\edim A-\edim B=\dim A - \dim B$.
 \end{theorem}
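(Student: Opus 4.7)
The plan is to reduce Theorem~\ref{th:rigid-action} to the combination of Theorems~\ref{th:de-smit-general} and~\ref{th:khare} by exploiting the strengthened hypothesis: $F$ is quasi-isomorphic in $D(A)$ to a genuine complex of $B$-modules, not merely carrying a derived $B$-action. This upgrade yields direct homological control of $B$ itself as an $A$-module, something that is invisible to a mere derived action.

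First I would establish that $A \to B$ is complete intersection with $\edim A - \edim B = \dim A - \dim B$. Replace the $B$-complex $G$ by a free $B$-resolution $P$; then $P \simeq G$ in $D(B)$, and hence $P \simeq F$ in $D(A)$. Since $F$ is a perfect $A$-complex of flat dimension at most $n$, the same holds for $P$ as an object of $D(A)$. Each term $P_i$ is a free $B$-module, so a change-of-rings spectral-sequence argument comparing $\operatorname{Tor}^A_*(B,N)$ with $\operatorname{Tor}^A_*(P,N)$ propagates the bound to $\fdim_A B \leq n$. With $\fdim_A B \leq n \leq \edim A - \edim B$ and $B$ a nonzero finitely generated $B$-module, Theorem~\ref{th:de-smit-general} applied with $N = B$ delivers the CI conclusion.

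Having CI with equal deviations $\edim A - \edim B = \dim A - \dim B$ forces the closed fiber $B/\fm_A B$ to be a complete intersection ring, so Theorem~\ref{th:khare}---applied via the derived $B$-action on $F$ inherited from the quasi-isomorphism $F \simeq G$---gives $M := \hh_0(F)$ free over $B$, say of rank $b$, together with the rank equalities $\rank_A F_i = b \binom{n}{i}$ when $F$ is taken to be $A$-minimal. For the remaining vanishing $\hh_i(F) = 0$ for $i \neq 0$, Nagata's Theorem~\ref{th:nagata} computes $\pdim_A M = \edim A - \edim B$, and since $M \cong B^b$ the $A$-Betti numbers of $M$ are $b \binom{\edim A - \edim B}{i}$. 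A Poincar\'e-series comparison of $F \lotimes_A k_A$ (of total $k_A$-dimension $b \cdot 2^n$) with the presentation $G \lotimes_A k_A \simeq G \lotimes_B (B \lotimes_A k_A)$ (of total dimension at least $b \cdot 2^{\edim A - \edim B}$ coming from the $\hh_0(G) = M$ contribution) forces $n = \edim A - \edim B$, after which matching Betti numbers identifies $F$ with the minimal $A$-free resolution of $M$, whence $\hh_i(F) = 0$ for $i \neq 0$.

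The main obstacle I expect is the change-of-rings step that propagates $\fdim_A P \le n$ to $\fdim_A B \le n$. The quasi-isomorphism $P \simeq F$ in $D(A)$ bounds the flat dimension of the whole complex $P$, but isolating a bound on the single $B$-module $B$ sitting inside each $P_i$ as a direct summand requires a careful hyperhomology argument; this is exactly where the strict $B$-complex hypothesis of Theorem~\ref{th:rigid-action} earns its keep over the mere derived action of Question~\ref{qu:Shekhar}.
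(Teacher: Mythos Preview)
The paper does not give its own proof of Theorem~\ref{th:rigid-action}; it simply records the statement and cites \cite[Theorem~1.2]{Brochard/Iyengar/Khare:2025}. So there is no in-paper argument to compare your proposal against. That said, your plan has two genuine gaps that you should be aware of.

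\textbf{The flat-dimension bound on $B$.} You correctly flag the passage from $\fdim_A P\le n$ to $\fdim_A B\le n$ as ``the main obstacle,'' but the spectral-sequence sketch you give does not close it. Knowing that the hyper-Tor of the \emph{total complex} $P$ against $k_A$ vanishes above degree~$n$ does not, by itself, force $\operatorname{Tor}^A_q(B,k_A)=0$ for $q>n$: the first-quadrant spectral sequence $E^1_{p,q}=\operatorname{Tor}^A_q(P_p,k_A)\Rightarrow H_{p+q}(P\otimes^{\mathrm L}_Ak_A)$ can exhibit arbitrary cancellation among the columns, and there is no corner term that obviously survives. Moreover, the free $B$-resolution $P$ of $G$ need not be bounded above (nothing tells you $G$ is $B$-perfect), so you cannot argue via the thick subcategory generated by $B$ either. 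In short, the very step where the ``strict $B$-action earns its keep'' is precisely the step you have not supplied, and it is not a routine bookkeeping matter. A cleaner route, closer to what one expects from the cited source, is to aim directly at $\hh_i(F)=0$ for $i\ne 0$: once that is known, $M=\hh_0(F)$ is a genuine $B$-module with $\operatorname{proj\,dim}_A M\le n\le \edim A-\edim B$, and Theorem~\ref{th:de-smit-general} applied to $N=M$ gives everything in one stroke, with no need to bound $\fdim_A B$ at all.

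\textbf{The Betti-number step.} Even granting the complete-intersection conclusion, your claim that the $A$-Betti numbers of $M\cong B^{b}$ are $b\binom{\edim A-\edim B}{i}$ is not justified. Nagata's theorem (Theorem~\ref{th:nagata}) is stated only for \emph{surjective} complete-intersection maps, and in any case it controls $\operatorname{proj\,dim}$, not individual Betti numbers. When $A\to B$ is not surjective there is no reason for the minimal $A$-resolution of $B$ to be a Koszul complex, so the binomial pattern you invoke can fail. Consequently the Poincar\'e-series matching that you use to force $n=\edim A-\edim B$ and identify $F$ with a minimal resolution of $M$ does not go through as written. (Your appeal to ``CI with equal deviations forces the closed fiber to be CI'' is correct, but it is itself a nontrivial theorem of Avramov; you should cite it rather than assert it.)
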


This is \cite[Theorem~1.2]{Brochard/Iyengar/Khare:2025}.  It applies, for example, when $F$ is the free resolution over $A$ of a finitely generated $B$-module, and hence it subsumes Theorem~\ref{th:de-smit-general}. Nevertheless, the conclusion that the homology of $F$ is concentrated in degree $0$ means that the hypothesis cannot always apply in the number theoretic context where one patches complexes, for the complexes involved  typically compute homology of arithmetic manifolds, and can have homology in positive degrees as well; see \cite[Section~5]{Brochard/Iyengar/Khare:2025}.

Khare's question also suggests a systematic study of invariants attached to complexes with derived actions of commutative rings. For instance: What restrictions arise on, say, the Betti numbers of a finite free $A$-complex $F$ if it admits a derived $B$-action? See \cite[Lemma~10.7]{Iyengar/Khare/Manning:2024a} for one such restriction.

\section{Wiles defect.}
\label{se:wilesdefect}
This section focuses on the commutative algebra that has emerged from the numerical criterion for detecting when a surjective map of local rings is an isomorphism of complete intersections, due to Wiles~\cite{Wiles:1995}, Diamond~\cite{Diamond:1997}, and Lenstra~\cite{Lenstra:1995}. It is a key tool in the argument that modularity propagates from the minimal to the non-minimal case;  see \cite[Introduction]{Iyengar/Khare/Manning:2024a} for details and a historical context for these ideas. 

Throughout this section $\mco$ is a complete discrete valuation ring and $A$ is a complete local $\mco$-algebra; thus, of the  form $\mco\pos{x_1,\dots,x_n}/I$. It is helpful to call such a ring a \emph{local analytic $\mco$-algebra}; see Kunz~\cite[\S13]{Kunz:1986} or \cite{Berger/Kiehl/Kunz/Nastold:1967}. It also helps to introduce the following language: An \emph{$\mco$-valued point} in $\Spec A$ is a map of $\mco$-algebras $\lambda\colon A\to \mco$; the \emph{codimension} of this point is the height of the prime ideal $\fp\colonequals \Ker\lambda$. In what follows, I say that  $A$, or an $A$-module $M$, has a property \emph{at $\lambda$} if the ring $A_\fp$, or the $A_\fp$-module $M_\fp$, has that property. For instance, when I write ``$M$ is free (of rank $\mu$) at $\lambda$", I mean that the $A_\fp$-module $M_\fp$ is free (of rank $\mu$).

Wiles et.~al.\ consider $\mco$-valued points $\lambda\colon A\to \mco$ of codimension $0$ such that, for $\fp=\Ker \lambda$, the map 
\[
\lambda_\fp \colon A_\fp \longrightarrow \mco_\fp\,,
\]
induced by localization at $\fp$ is an isomorphism. Note that $\mco_\fp$ is the field of fractions of $\mco$; in particular, $A$ is regular at $\lambda$. The idea is to probe properties of $A$, or some $A$-module $M$, by testing its properties at suitable regular points of codimension $0$. 

As noted above, in the context of modularity lifting, the goal is to prove that some $A$-module $M$ is free, or at least has a nonzero free summand, which suffices for many applications. To that end, one considers the conormal  module of the map $\lambda$, namely the $\mco$-module $\fp/\fp^2$, and, for any finitely generated $A$-module $M$, the \emph{congruence module} of $M$, defined to be
\begin{equation}
\label{eq:cmod0}
\cmod_\lambda(M)\colonequals \frac{M}{M[\fp]+M[I]} \quad \text{where $I=A[\fp]$.}    
\end{equation}
Here $M[I]$ denotes the submodule $\{m\in M\mid I\cdot m=0\}$ consisting of elements of $M$ annihilated by $I$. Number theoretic origins and motivations for this definition are given in the Introduction to \cite{Iyengar/Khare/Manning:2025a}. The hypothesis that $\lambda$ is a regular point of codimension 0 ensures that both $\mco$-modules $\fp/\fp^2$ and $\cmod_\lambda(M)$ have finite length; this is easy to check, by localizing at $\fp$. It is equally easy to check that also the converse holds: If for some $\mco$-valued point $\lambda$, either $\fp/\fp^2$ or $\cmod_\lambda(A)$ has finite length, then $\lambda$ is regular of codimension $0$. See \cite[\S5.1]{Darmon/Diamond/Taylor:1997} or \cite[Example~2.7]{Iyengar/Khare/Manning:2025a} for examples arising from number theory.

The following result is essentially due to Diamond~\cite{Diamond:1997} and Wiles~\cite{Wiles:1995}. I have incorporated improvements due to Fakhruddin, Khare, and Ramakrishna~\cite[Appendix]{Fakhruddin/Khare/Ramakrishna:2021}, and stated it using the language introduced above.
By ``$M$ is supported at $\lambda$" I mean that $M_\fp\ne 0$ for $\fp\colonequals \Ker\lambda$.

 \begin{theorem}
     \label{th:diamond}
Let $A$ be a local analytic $\mco$-algebra and $M$ a finitely generated $A$-module with $\depth_AM\ge 1$.
If $M$ is supported at a regular $\mco$-valued point  $\lambda$ of codimension $0$,  then $M$ is free at $\lambda$, say of rank $\mu$, and for $\fp\colonequals \Ker\lambda$ there is an inequality
\[
\mu\cdot \length_\mco (\fp/\fp^2) \ge \length_\mco \cmod_\lambda(M)\,.
\]
Equality holds if and only if $A$ is complete intersection and $M\cong A^\mu\oplus W$, with $W$  not supported at $\lambda$. \qed
 \end{theorem}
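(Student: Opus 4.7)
\emph{Freeness at $\lambda$} is immediate from localization: since $A_\fp \cong \mco_\fp = K$ is a field, the $A_\fp$-module $M_\fp$ is automatically free, of some positive rank $\mu$ (positive because $M$ is supported at $\lambda$).

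\emph{The inequality} requires a structural input from depth and then a length comparison. First, I would show $M[\fp]$ is free of rank $\mu$ over $\mco$: since $A/\fp \cong \mco$ gives $\fm_A = \fp + (p)$, any $\mco$-torsion element of $M[\fp]$ is killed by $\fp + (p^k) \supseteq \fm_A^k$ for some $k$, contradicting $\depth_A M \ge 1$; combined with $M[\fp]_\fp = M_\fp \cong K^\mu$, this forces $M[\fp] \cong \mco^\mu$. Second, $I\fp = 0$ gives $\fp M \subseteq M[I]$, so the canonical surjection $M \twoheadrightarrow \cmod_\lambda(M)$ factors through $M/\fp M = M \otimes_A \mco$, and the image of $M[\fp]$ in $M/\fp M$ lies in the kernel of the projection to $\cmod_\lambda(M)$. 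The remaining step is to construct an $\mco$-linear surjection $(\fp/\fp^2)^\mu \twoheadrightarrow \cmod_\lambda(M)$: I would use the natural pairing $M[\fp] \times \Hom_A(M, \mco) \to \mco$ between two $\mco$-free modules of rank $\mu$ (the dual $\Hom_A(M, \mco) = \Hom_\mco(M/\fp M, \mco)$ is $\mco$-free of rank $\mu$ since $\mco$ is a DVR and the quotient $M/\fp M$ has generic rank $\mu$), identify the cokernel of the associated map $M[\fp] \to \Hom_\mco(\Hom_A(M, \mco), \mco)$ with $\cmod_\lambda(M)$, and then show the image of this pairing contains the Fitting-ideal-type data of $(\fp/\fp^2)^\mu$, in the spirit of the Wiles--Lenstra argument that handles the case $M = A$, $\mu = 1$.

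\emph{Equality case.} Equality of lengths forces the constructed surjection to be an isomorphism. A Nakayama-type argument then produces a free $A$-summand $A^\mu \subseteq M$ by lifting the $\mco$-basis of $M[\fp]$ to elements of $M$ that generate an $A$-free submodule, giving a splitting $M \cong A^\mu \oplus W$ with $W$ not supported at $\lambda$; the Wiles--Lenstra criterion characterizes complete-intersection $\mco$-algebras in this setting as precisely those for which $\length_\mco \cmod_\lambda(A) = \length_\mco(\fp/\fp^2)$, forcing $A$ to be complete intersection. The chief technical obstacle is the precise construction and control of the surjection $(\fp/\fp^2)^\mu \twoheadrightarrow \cmod_\lambda(M)$ with enough fidelity to track equality at every step; this is where the congruence-module formalism and pairing perspective developed in \cite{Iyengar/Khare/Manning:2024a} supply the necessary machinery.
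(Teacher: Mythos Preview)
The paper does not give a self-contained proof of this theorem; it is stated with a \qed and attributed to Diamond, Wiles, and Fakhruddin--Khare--Ramakrishna. However, the surrounding commentary and the later defect formula~\eqref{eq:defect-formula} make the intended argument clear, and your proposal diverges from it in a way that creates a genuine gap.

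\medskip

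\textbf{Where your argument aligns.} The freeness at $\lambda$ is correct, as is the verification that $M[\fp]\cong\mco^\mu$ via the depth hypothesis. Your identification of $\cmod_\lambda(M)$ with the cokernel of $M[\fp]\to (M/\fp M)^{\mathrm{tf}}$ (via the double dual) is exactly the higher-codimension definition specialized to $c=0$, so that part is fine.

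\medskip

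\textbf{The gap in the inequality.} You assert the existence of a surjection $(\fp/\fp^2)^\mu\twoheadrightarrow\cmod_\lambda(M)$ but never construct it, and in fact there is no natural map of this shape. What the paper actually uses is the two-step chain
\[
\length_\mco\cmod_\lambda(M)\;\le\;\mu\cdot\length_\mco\cmod_\lambda(A)\;\le\;\mu\cdot\length_\mco(\fp/\fp^2),
\]
which is the content of the defect formula~\eqref{eq:defect-formula} at $c=0$. The first inequality comes from the observation that $A[\fp]\cdot M\subseteq M[\fp]$, so the image of $M[\fp]$ in $(M/\fp M)^{\mathrm{tf}}\cong\mco^\mu$ contains $\lambda(A[\fp])\cdot\mco^\mu$; this is exactly the surjectivity statement for the K\"unneth map $\kappa_\lambda(M)$. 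The second inequality is the inclusion $\fitt_0(\fp)\subseteq A[\fp]$ of~\eqref{eq:wiles-inequality}, pushed forward along $\lambda$. Your pairing $M[\fp]\times\Hom_A(M,\mco)\to\mco$ never reappears after being introduced, and the phrase ``contains the Fitting-ideal-type data of $(\fp/\fp^2)^\mu$'' is not a substitute for this two-step decomposition.

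\medskip

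\textbf{The gap in the equality case.} You propose to extract the free summand $A^\mu$ first, by a Nakayama-type lifting of an $\mco$-basis of $M[\fp]$, and only afterwards deduce that $A$ is complete intersection. This is backwards, and the lifting step does not work as stated: lifts $m_1,\dots,m_\mu\in M$ of an $\mco$-basis of $M[\fp]$ have no reason to span a free $A$-submodule, let alone a summand, without some control on $A$. The paper's route (and Diamond's original one) runs in the opposite order: equality in the two-step chain above forces \emph{both} $\delta_\lambda(A)=0$ and $\Coker\kappa_\lambda(M)=0$. The first gives that $A$ is a complete intersection via Wiles--Lenstra (equivalently Theorem~\ref{th:dlw}); only then, with $A$ Gorenstein in hand, does the vanishing of $\Coker\kappa_\lambda(M)$ yield the free summand. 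The Gorenstein input is what makes the splitting argument go through, and it is not available in the order you propose.
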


Let me make a few comments on the special case when $M=A$; in any case, this is an important step in the proof of the general case of theorem; see the discussion around Theorem~\ref{th:splitting}.

In the sequel, the $i$'th Fitting ideal of a finitely generated module $N$ over a ring $R$ is denoted $\fitt^R_i(N)$; see \cite[pp.~21]{Bruns/Herzog:1998a} or \cite[Appendix D]{Kunz:1986} for the definition and basic properties of Fitting ideals. The ring is usually suppressed from the notation. Given that $\mco$ is a discrete valuation ring,  the length of a finitely generated $\mco$-module, say $U$, is equal to the length of $\mco/\fitt_0(U)$.

The $0$'th Fitting ideal of a module annihilates it, so one gets an inclusion
\begin{equation}
\label{eq:wiles-inequality}
\fitt_0(\fp) \subseteq A[\fp]\,,
\end{equation}
and this leads to the inequality in Theorem~\ref{th:diamond}, for $M=A$. Here is a slightly different perspective on the inclusion in \eqref{eq:wiles-inequality}: Let $\mathbf{a}\colonequals a_1,\dots,a_n$ be a minimal generating set for the ideal $\fp=\Ker\lambda$, and $K$ the Koszul complex on $\mathbf{a}$, viewed as a dg algebra; see \cite[Proposition~1.6.2]{Bruns/Herzog:1998a}. Its homology $\hh_*(K)$ is a strictly-graded commutative algebra. This gives the inclusion below:
\[
 \fitt_0(\fp) K_n  = \wedge^n \hh_1(K) \subseteq \hh_n(K) = A[\fp]K_n\,. 
\]
The equalities are immediate from an inspection of the differentials in the Koszul complex and the definition of Fitting ideals. Since $K_n\cong A$, the inclusion above is equivalent to the one in \eqref{eq:wiles-inequality}. Therefore Theorem~\ref{th:diamond}, for $M=A$, translates to the following:

\begin{theorem}
\label{th:dlw}
Let $A$ be a local analytic $\mco$-algebra with $\depth A\ge 1$ and $\lambda$ a regular $\mco$-valued point of codimension $0$. Let $K$ be the Koszul complex on a minimal generating set for $\Ker\lambda$, say consisting of $n$ elements. Then $\wedge^n\hh_1(K)=\hh_n(K)$ if and only if $A$ is complete intersection and $\dim A=1$.
\end{theorem}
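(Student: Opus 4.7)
My plan is to derive Theorem~\ref{th:dlw} directly from Theorem~\ref{th:diamond} applied to $M = A$, using the three Koszul identifications already spelled out in the excerpt: $K_n \cong A$, $\hh_n(K) = A[\fp]\cdot K_n$, and $\wedge^n\hh_1(K) = \fitt_0(\fp)\cdot K_n$. Together these reduce the equality $\wedge^n\hh_1(K) = \hh_n(K)$ to the ideal equality $\fitt_0(\fp) = A[\fp]$ in $A$; the containment $\fitt_0(\fp) \subseteq A[\fp]$ recorded in \eqref{eq:wiles-inequality} recovers the general Koszul inclusion.

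Next I would invoke Theorem~\ref{th:diamond} with $M = A$. Since $\lambda$ is a map of $\mco$-algebras whose composition with the structure map $\mco \to A$ is the identity, $\lambda$ is surjective and $A_\fp \cong \mco_\fp$, so $A$ is supported at $\lambda$ with rank $\mu = 1$; the depth hypothesis is part of the setup. Diamond's inequality then reads $\length_\mco(\fp/\fp^2) \ge \length_\mco\cmod_\lambda(A)$, with equality iff $A$ is a complete intersection: in the decomposition $A \cong A^\mu \oplus W$ of Theorem~\ref{th:diamond} the summand $W$ must vanish since $A$ is indecomposable as a module over itself. The side conclusion $\dim A = 1$ is then automatic, since a complete intersection is Cohen--Macaulay and equidimensional, so $\dim A = \dim(A/\fp) = \dim\mco = 1$.

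The technical heart, and where I expect the main obstacle, is a length identity bridging the two equalities, namely
\[
\length_\mco\bigl(A[\fp]/\fitt_0(\fp)\bigr) = \length_\mco(\fp/\fp^2) - \length_\mco\cmod_\lambda(A),
\]
so that the left side vanishes iff the right side does. To compute the right side I would combine the cited identity $\length_\mco U = \length_\mco(\mco/\fitt_0^\mco U)$ with the base-change formula $\fitt_0^\mco(\fp\otimes_A\mco) = \lambda(\fitt_0^A(\fp))$ applied to $\fp\otimes_A\mco = \fp/\fp^2$, together with a parallel identification $\cmod_\lambda(A) \cong \mco/\lambda(A[\fp])$, which holds because $A[\fp]$ is an $\mco$-submodule of $A$ via $A/\fp \cong \mco$. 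Once both lengths are interpreted as colengths of the nested ideals $\lambda(\fitt_0(\fp)) \subseteq \lambda(A[\fp])$ in $\mco$, the identity reduces to $\length(J_1/J_2) = \length(\mco/J_2) - \length(\mco/J_1)$ for $J_2 \subseteq J_1$. The delicate point, which I expect will require actual bookkeeping, is that this uses injectivity of $\lambda$ on $A[\fp]$, equivalently $A[\fp] \cap \fp = 0$; this is automatic when $A$ is reduced (any such element squares to zero in $A$), but the non-reduced case needs separate care.
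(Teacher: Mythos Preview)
Your strategy matches the paper's: Theorem~\ref{th:dlw} is presented there precisely as the translation of Theorem~\ref{th:diamond} for $M=A$ through the Koszul identities you list, and the text then separately points to Tate's theorem and Wiebe's criterion (Theorem~\ref{th:tate-wiebe}) as direct arguments for the two implications. So there is no divergence in approach; you are filling in bookkeeping the paper leaves implicit.

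The point you leave open is genuine but is dispatched by the standing hypothesis $\depth A\ge 1$, so no reducedness assumption is needed. Since $A_\fp$ is the field $\mco_\fp$, one has $\fp=\Ker(A\to A_\fp)$, and hence every element of $\fp$ is annihilated by something outside $\fp$. If $0\ne x\in A[\fp]\cap\fp$, then $\ann_A(x)$ strictly contains $\fp$ and is proper, hence (as $A/\fp\cong\mco$) equals $\fp+(\varpi^k)$ for some $k\ge 1$; but then $\varpi^{k-1}x$ is a nonzero element killed by $\fm_A=\fp+(\varpi)$, forcing $\depth A=0$. Thus $A[\fp]\cap\fp=0$, $\lambda$ is injective on $A[\fp]$, and your length identity follows. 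The same observation yields $A[A[\fp]]=\fp$, which is exactly what makes the definition in~\eqref{eq:cmod0} agree with $\mco/\lambda(A[\fp])$.
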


The claim that the stated equality holds when $A$ is a complete intersection follows from a result of Tate~\cite[Theorem~4]{Tate:1957a}.
A somewhat different argument, tailored to the specific context, is given in \cite[\S5.3]{Darmon/Diamond/Taylor:1997}; see also the computation in the Appendix to ~\cite{Mazur/Roberts:1969}, by Mazur and Roberts; both are attributed to Tate.

The reverse implication, that when $\wedge^n\hh_1(K)=\hh_n(K)$ holds $A$ is complete intersection,  is reminiscent of, and easily deduced from, the result below due to Wiebe~\cite[Theorem 2.3.16]{Bruns/Herzog:1998a}.

\begin{theorem}
\label{th:tate-wiebe}
Let $R$ be a noetherian local ring and $K$ the Koszul complex on a minimal generating set for the maximal ideal of $R$. With $e=\edim R$, if $\wedge^e \hh_1(K)\ne 0$, then $R$ is an artinian complete intersection. \qed
\end{theorem}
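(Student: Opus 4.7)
My plan is to reformulate the hypothesis in terms of the zeroth Fitting ideal of $\fm$, and then exploit this ideal-theoretic condition. Following the discussion preceding Theorem~\ref{th:dlw}, the image of the Koszul multiplication map $\wedge^e \hh_1(K) \to \hh_e(K)$ coincides with $\fitt_0(\fm) \cdot K_e$ inside $K_e \cong R$, so the assumption $\wedge^e \hh_1(K) \neq 0$ is equivalent to $\fitt_0(\fm) \neq 0$.

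First I would deduce $\depth R = 0$. Choose a minimal free presentation $R^m \xrightarrow{M} R^e \to \fm \to 0$; nonvanishing of $\fitt_0(\fm)$ yields an $e \times e$ submatrix $\phi$ of $M$ with $\det \phi \neq 0$, whose entries lie in $\fm$ by minimality, and which satisfies $(x_1, \ldots, x_e) \phi = 0$ because the columns of $M$ are syzygies of the minimal generating set. Cramer's identity $\phi \cdot \mathrm{adj}(\phi) = \det(\phi) \cdot I_e$ applied to this relation gives $\det(\phi) \cdot x_i = 0$ for every $i$, so $\det \phi$ is a nonzero element of the socle $(0:_R\fm) = \hh_e(K)$; hence $\depth R = 0$.

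To prove $R$ is artinian and a complete intersection, I would pass to the $\fm$-adic completion (which preserves both the hypothesis and the conclusion) and invoke Cohen's structure theorem to write $R = S/I$ with $S$ a complete regular local ring of dimension $e$ and $I \subseteq \fm_S^2$. Lifting $\phi$ to $\tilde{\phi} \in M_e(S)$ with entries in $\fm_S$, set
\[
f_j \colonequals \sum_{i=1}^e x_i \tilde{\phi}_{ij} \in I, \qquad j = 1, \ldots, e.
\]
Cramer in $S$ then gives $\det(\tilde{\phi}) \cdot x_i \in (f_1, \ldots, f_e)$ for each $i$, hence $\det(\tilde{\phi}) \cdot \fm_S \subseteq (f_1, \ldots, f_e) \subseteq I$. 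The crucial step is to upgrade this to the equality $(f_1, \ldots, f_e) = I$: the plan is to carry out a leading-term analysis in the associated graded ring $\mathrm{gr}_{\fm_S} S$, combined with Nakayama's lemma, showing that the quadratic leading parts of the $f_j$ are forced, by the nonvanishing of $\det \tilde{\phi}$, to span $I / \fm_S I$. Granting this, $I$ is minimally generated by $e$ elements in a regular local ring of dimension $e$, so these elements necessarily form a regular sequence; hence $R = S/(f_1, \ldots, f_e)$ is a complete intersection with $\dim R = 0$, i.e., an artinian complete intersection.

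The main obstacle is the equality $(f_1, \ldots, f_e) = I$, which lies at the heart of Wiebe's argument. The difficulty is that the chosen square submatrix $\phi$ captures only $e$ syzygies out of the potentially larger minimal generating set of the syzygy module of $\fm$, and one has to use the nonvanishing of $\det \phi$ in a nontrivial way to ensure that the associated lifts $f_j$ actually minimally generate $I$ in $S$; once this is secured, the rest of the proof is essentially formal.
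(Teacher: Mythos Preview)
The paper does not actually prove this theorem: it is stated with a \verb|\qed| and a reference to Wiebe's result as recorded in \cite[Theorem~2.3.16]{Bruns/Herzog:1998a}. So the comparison is between your attempt and the argument in Bruns--Herzog.

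Your overall strategy---translate to $\fitt_0(\fm)\ne 0$, produce a square submatrix $\phi$ with nonzero determinant, pass to a Cohen presentation $R=S/I$, lift to $\tilde\phi$, and set $f_j=\sum_i x_i\tilde\phi_{ij}$---is exactly the skeleton of Wiebe's proof. The Cramer step giving $\det(\tilde\phi)\cdot\fm_S\subseteq(f_1,\dots,f_e)\subseteq I$ is also correct. However, the way you close the argument has a genuine gap.

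You propose to show $(f_1,\dots,f_e)=I$ by a leading-term argument in $\mathrm{gr}_{\fm_S}S$, and then assert: ``$I$ is minimally generated by $e$ elements in a regular local ring of dimension $e$, so these elements necessarily form a regular sequence.'' This implication is \emph{false}. For instance, $I=(x^2,xy)$ in $S=k[[x,y]]$ is minimally generated by two elements, $S$ is regular of dimension two, yet $x^2,xy$ is not a regular sequence (the ideal has height~$1$). So even if your leading-term step succeeded, the argument would not conclude. The leading-term plan itself is also suspect: initial forms of the $f_j$ need not be quadratic (they only lie in $\fm_S^2$), and initial forms do not behave additively, so it is unclear how nonvanishing of $\det\tilde\phi$ forces the images of the $f_j$ to span $I/\fm_S I$.

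In the standard proof the two steps come in the \emph{opposite} order. One first shows that $f_1,\dots,f_e$ is a regular sequence in $S$; this is the genuine crux, and it uses that $\det\tilde\phi\notin I$ (hence $\det\tilde\phi\notin(f)$), not merely $\det\tilde\phi\ne 0$ in $S$. Once that is in hand, $S/(f)$ is a zero-dimensional complete intersection, hence Gorenstein with one-dimensional socle, and the relation $\det(\tilde\phi)\cdot\fm_S\subseteq(f)$ shows that this socle is generated by the image of $\det\tilde\phi$. Since $\det\tilde\phi\notin I$, the surjection $S/(f)\twoheadrightarrow S/I=R$ does not kill the socle; but any nonzero ideal of an artinian Gorenstein ring meets the socle, so the kernel $I/(f)$ must vanish, giving $(f)=I$. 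Thus the equality $(f)=I$ is a \emph{consequence} of the regular-sequence property, not a route to it.
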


Number theoretic applications of the numerical criterion of Diamond, Lenstra, and Wiles rely on ingredients (like Ihara's Lemma) that proved hard to generalize. More fundamental than this pragmatic difficulty is the point that the numerical criterion proves isomorphisms of complete intersections, and in later developments of Wiles' methods the rings are not expected to be complete intersections. There are also situations where the rings are torsion and hence admit no $\mco$-valued points. Applying a numerical criterion to patched rings circumvents these problems as these are expected to be complete intersections, with sufficiently many regular $\mco$-valued points, in more complex cases as well. However, the points are no longer of codimension zero, so one need extensions of the theory of congruence modules and Wiles defect that apply to points of higher codimension. This is achieved in \cite{Iyengar/Khare/Manning:2024a}. What follows is an overview of our findings in \emph{op.~cit.}, and also of some ongoing projects that  build on it.

\subsection*{Higher codimensions.}
In what follows $\cato_{\mco}$ denotes the category of such pairs $(A,\lambda)$ where $A$ is a local analytic $\mco$-algebra and $\lambda\colon A\to \mco$ is a regular $\mco$-valued point in $\Spec A$; there is no restrictions on the codimension of the point. The morphisms in $\cato_\mco$ are the morphisms of $\mco$-algebras over $\mco$, that is to say morphisms $\varphi \colon A\to B$ of $\mco$-algebras such that the following diagram commutes:
\[
\begin{tikzcd}
A \arrow[dr,"\lambda_A" swap] \arrow[rr,"\varphi"] && B \arrow[dl,"\lambda_B"] \\
&\mco&
\end{tikzcd}
\]
The subcategory consisting of pairs $(A,\lambda)$ where $\lambda$ (that is to say, $\Ker\lambda$) has codimension $c$ is denoted $\cato_\mco(c)$. The work of Wiles et.~al. discussed above is all about the category $\cato_\mco(0)$.

Fix $(A,\lambda)$ in $\cato_\mco(c)$, and set $\fp\colonequals \Ker\lambda$. The hypothesis that $\lambda$ is regular of codimension $c$ translates to the condition that the conormal module $\fp/\fp^2$ has rank $c$, that is to say, isomorphic to $\mco^c\oplus \tors(\fp/\fp^2)$. I write $\tors U$ for the torsion submodule of an $\mco$-module $U$; thus, $\tors U=\Ker (U\to U\otimes_\mco E)$, where $E$ is the field of fractions of $\mco$. The cokernel of the inclusion $\tors U\subseteq U$ is the \emph{torsion-free} quotient of $U$, so one has an exact sequence 
\[
0\longrightarrow \tors U \longrightarrow U \longrightarrow \tfree U\longrightarrow 0
\]
of $\mco$-modules. This is split-exact, which is important in some arguments, though there is no canonical splitting. 

The torsion submodule of the conormal module $\fp/\fp^2$ (rather, its length) plays an important role in the sequel. Observe that the length of $\tors(\fp/\fp^2)$ equals the length of $\mco/\fitt_c(\fp/\fp^2)$. It follows from the Jacobi-Zariski sequence arising from the diagram $\mco\to A\xrightarrow{\lambda}\mco$ that there is a natural isomorphism
\begin{equation}
\label{eq:kahler-different}
\hat{\Omega}_{A/\mco}\otimes_A \mco \cong \fp/\fp^2
\end{equation}
where $\hat{\Omega}_{A/\mco}$ is the universally finite module of  differentials; see \cite[\S11]{Kunz:1986}.  Hence $\fitt_c(\fp/\fp^2)$ equals 
$\fitt^A_c(\hat{\Omega}_{A/\mco})\cdot \mco$, the $c$'th Fitting ideal of the module of K\"ahler differentials extended to $\mco$ along $\lambda$. The relevance of this observation is made clear in \cite[\S2.9]{Iyengar/Khare/Manning:2025a}.

Let $M$ be a finitely generated $A$-module. The surjective map $M\to M/\fp M$ induces the first map below:
\[
\eta_\lambda(M)\colon \ext^c_A(\mco,M)\longrightarrow \ext^c_A(\mco,M/\fp M)\longrightarrow \tfree{\ext^c_A(\mco,M/\fp M)}\,.
\]
The second is the natural quotient map.  The \emph{congruence module} of $M$ at $\lambda$ is the cokernel of the map above:
\[
\cmod_\lambda(M) \colonequals \Coker \eta_\lambda(M)\,.
\]
 The appearance of the torsion-free quotient in the definition of $\eta_\lambda(M)$, and hence in the congruence module, seems a little arbitrary, but it is exactly what is needed to make this a good definition, as becomes clear from the proof of the results in \cite[Part~1]{Iyengar/Khare/Manning:2024a}. It is also needed to ensure that for $c=0$ this module is the same as the one given in \eqref{eq:cmod0} as long as $\depth_AM\ge 1$; verifying this is a simple exercise, done in \cite[Proposition~2.10]{Iyengar/Khare/Manning:2024a}.
 
 Once again, since $\lambda$ is regular of codimension $c$, the congruence module of $M$ has finite length for each $M$. Conversely, the length of $\cmod_\lambda(A)$ can be finite only when $\lambda$ is  a regular  point. Unlike in the case $c=0$, the proof of this claim depends on a difficult result, due to Lescot~\cite{Lescot:1983}; see \cite[Theorem~2.1 and Notes 2.1.1]{Iyengar/Khare/Manning:2025a}.
 
 There is a closely related notion of a \emph{congruence ideal} attached to $M$; please see \cite{Iyengar/Khare/Manning:2025a} for details, including a description of the map $\eta_\lambda(M)$ in term of Yoneda extensions, and for the historical antecedents for these notions.  In this paper, I will focus mainly on the following invariant 
\begin{equation}
\label{eq:defect}
\delta_{\lambda}(M)\colonequals 
    \mu\cdot \length_{\mco} \tors(\fp/\fp^2)  - \length_{\mco}\cmod_\lambda(M)\,,
\end{equation}
where $\mu$ is the rank of $M$ at $\lambda$ and  $\fp=\Ker\lambda$, introduced in \cite{Iyengar/Khare/Manning:2024a} as the \emph{Wiles defect of $M$} with respect to $\lambda$; compare \eqref{eq:wiles-inequality}, keeping in mind that $\fp/\fp^2$ is torsion when $c=0$. It follows from Theorem~\ref{th:diamond} that $\delta_{\lambda}(M)\ge 0$ for $c=0$. It is not clear, and I suspect it is not true in general, that the defect is always non-negative without additional hypotheses on $M$; see Theorem~\ref{th:splitting} below.

To understand the Wiles defect better it helps to consider the map
\begin{equation}
    \label{eq:kappaM}
    \kappa_\lambda(M)\colon \ext^c_A(\mco,A)\otimes_A M \longrightarrow \ext_A^c(\mco,M) \longrightarrow \tfree{\ext_A^c(\mco,M)}\,,
\end{equation}
where the one on the left is the usual K\"unneth map. The K\"unneth map appears in the long exact sequence relating the bounded Ext-modules, the Ext-modules, and the stable Ext-modules, of the pair $(\mco,M)$; see \cite[Notes 2.4.1]{Iyengar/Khare/Manning:2025a}. This partly explains its relevance to detecting free summands of $M$; see Theorem~\ref{th:splitting} below.

Most results about congruence modules require that $M$ has sufficient depth; usually $\depth_AM\ge c+1$ suffices. Such a hypothesis is reasonable in number theoretic applications, where one has often good control on the modules $M$ that appear, but not always on the underlying ring $A$.  The conditions $\depth_AM\ge c+1$ has two useful consequences:
  \begin{itemize}
      \item 
      The $\mco$-module $\ext^c_{A}(\mco,M)$ is torsion-free, so \eqref{eq:kappaM} is the usual K\"unneth map
      \item 
      The module $M$ is free at any regular $\mco$-valued point $\lambda$.
  \end{itemize}
 The first statement is immediate from the long exact sequence that arises when we apply $\Hom_A(-,M)$ to the exact sequence of $A$-modules $0\to \mco\xrightarrow{\varpi}\mco\to k_A\to 0$, where $\varpi$ is a uniformizer for $\mco$. The second statement follows from the Auslander-Buchsbaum formula, applied to the $A_\fp$-module $M_\fp$.

The following ``defect formula" from \cite[Lemma~3.7]{Iyengar/Khare/Manning:2024a} teases apart the defect of module into one part that depends only on the ring $A$, and another part coming from \eqref{eq:kappaM}, and hence dependent on $M$:
\begin{equation}
\label{eq:defect-formula}
\delta_{\lambda}(M) = \mu\cdot \delta_{\lambda}(A) + \length_{\mco}\Coker \kappa_\lambda(M)\,.     
\end{equation}
This holds under the assumption that $M$ is free, of rank $\mu$, at $\lambda$.
This goes into the proof of the following result from \cite{Iyengar/Khare/Manning:2025a}; it is a perfect extension of Theorem~\ref{th:diamond}.

\begin{theorem}
    \label{th:splitting}
Let $A$ be an analytic $\mco$-algebra and $M$ a finitely generated $A$-module.
If $M$ is supported at a regular $\mco$-valued point  $\lambda$ of codimension $c$ and $\depth_A M\ge c+1$, then
$\delta_\lambda(M)\ge 0$. Moreover, $\delta_\lambda(M)=0$ if and only if $A$ is complete intersection and $M\cong A^\mu\oplus W$, where $\mu$ is the rank of $M$ at $\lambda$, and $W$ is not supported at $\lambda$. \qed   
\end{theorem}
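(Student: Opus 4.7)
The plan is to induct on the codimension $c$, using Theorem~\ref{th:diamond} as the base case and invoking the defect formula \eqref{eq:defect-formula} to reduce the module statement to a ring statement. Under the depth hypothesis, localizing to $\fp=\Ker\lambda$ and applying the Auslander-Buchsbaum formula over the regular local ring $A_\fp$ of dimension $c$ shows that $M_\fp$ is free of some rank $\mu\ge 1$, as noted just before the statement. With $M$ free at $\lambda$, the defect formula reads
\[
\delta_\lambda(M) = \mu\cdot\delta_\lambda(A) + \length_\mco\Coker\kappa_\lambda(M),
\]
and since $\mu\ge 1$ and the cokernel contributes nonnegatively, it suffices to establish $\delta_\lambda(A)\ge 0$ for the inequality; the equality case then further requires $\delta_\lambda(A)=0$ together with surjectivity of $\kappa_\lambda(M)$ onto $\tfree{\ext^c_A(\mco,M)}$.

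To prove $\delta_\lambda(A)\ge 0$ I would induct on $c$; the base case $c=0$ is Theorem~\ref{th:diamond} applied to $M=A$. For the inductive step I would pick an $A$-regular element $x\in\fp$ whose image in the torsion-free quotient $\mco^c\subseteq\fp/\fp^2$ is part of a basis, using prime avoidance together with \eqref{eq:kahler-different}. Passing to the quotient $A'=A/xA$ gives a pair $(A',\bar\lambda)\in\cato_\mco(c-1)$; the change-of-rings long exact sequence arising from $0\to A\xrightarrow{x}A\to A'\to 0$ relates $\ext^c_A(\mco,A)$ to $\ext^{c-1}_{A'}(\mco,A')$, while the conormal sequence relates $\tors(\fp/\fp^2)$ to $\tors(\bar\fp/\bar\fp^2)$. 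Together these comparisons should yield an identity $\delta_\lambda(A)=\delta_{\bar\lambda}(A')$, and the induction hypothesis closes out the inequality. Complete-intersectedness is preserved in both directions under this kind of regular quotient, which will be used to track equality through the induction.

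For the equality case, $\delta_\lambda(M)=0$ forces $\delta_\lambda(A)=0$ and $\kappa_\lambda(M)$ surjective. The first forces equality at every level of the induction, hence that $A$ itself is a complete intersection, by tracing the equality statement of Theorem~\ref{th:diamond} back up the chain $A\twoheadrightarrow A'\twoheadrightarrow\cdots$ to codimension zero. Once $A$ is a complete intersection, $\ext^c_A(\mco,A)$ has a well-understood free structure over $\mco$ (essentially by Tate's theorem, as used in the discussion following Theorem~\ref{th:dlw}), and the surjectivity of $\kappa_\lambda(M)$ lifts a basis of the free module $M_\fp$ to an $A$-linear map $A^\mu\to M$ realizing a split embedding $A^\mu\hookrightarrow M$ with cokernel $W$ automatically unsupported at $\lambda$. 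The converse direction is immediate from the defect formula applied to $M=A^\mu\oplus W$, together with the vanishing of $\delta_\lambda(A)$ for complete intersection $A$. The main obstacle is the inductive comparison identity: the torsion-free truncation in the definition of $\cmod_\lambda$ does not behave functorially with respect to the change-of-rings sequence, so careful bookkeeping of Fitting ideals via \eqref{eq:kahler-different} and the Koszul identities underlying \eqref{eq:wiles-inequality}, combined with the depth hypothesis to control extraneous Tor terms, will be essential to promote the heuristic equality $\delta_\lambda(A)=\delta_{\bar\lambda}(A')$ to an exact statement.
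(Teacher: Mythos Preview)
Your overall architecture---reduce via the defect formula \eqref{eq:defect-formula} and induct on $c$, with Theorem~\ref{th:diamond} as the base---is the paper's as well, and the identity $\delta_\lambda(A)=\delta_{\bar\lambda}(A')$ you aim to prove is precisely Theorem~\ref{th:deformation} specialized to $M=A$. So the bookkeeping about torsion-free truncations that you flag as the main obstacle is in fact already packaged there.

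The genuine gap is elsewhere. You propose to run the induction on $A$ alone by choosing an $A$-regular element $x\in\fp$, but the hypotheses give you no depth on $A$ whatsoever; the only depth assumption is $\depth_A M\ge c+1$. If $\depth A=0$ there is no $A$-regular element in $\fp$, and even your base case (Theorem~\ref{th:diamond} with $M=A$) needs $\depth A\ge 1$. The paper meets this head-on. It runs the induction on the pair $(A,M)$ using Theorem~\ref{th:deformation} with $f$ chosen to be a nonzerodivisor on $M$, which \emph{is} available; this already yields $\delta_{\lambda_A}(M)=\delta_{\lambda_B}(M/fM)\ge 0$ by induction, without any appeal to $\delta_\lambda(A)$. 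For the equality case, induction gives that $B=A/fA$ is complete intersection and $M/fM\cong B^\mu\oplus W'$, but lifting ``$B$ is complete intersection'' to ``$A$ is complete intersection'' would require $f$ to be $A$-regular, which again is not given. The paper's fix is to first replace $A$ by its image in $\End_A(M)$---a ring that inherits enough depth from $M$---and to verify that this replacement leaves the Wiles defect unchanged (an ``invariance of domain'' property of congruence modules; see \cite[Theorem~7.4]{Iyengar/Khare/Manning:2024a}). Your proposal does not account for this step, and without it neither the inequality (as you have structured it) nor the propagation of the complete-intersection property can be completed.
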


The argument invokes the following result concerning deformation in $\cato_\mco$.

\begin{theorem}
    \label{th:deformation}
Fix $(A,\lambda_A)$ in $\cato_\mco(c)$ and a finitely generated $A$-module $M$. For $\fp\colonequals \Ker\lambda$, let $f$ in $ \fp\setminus \fp^{(2)}$ be element that is not a zero divisor on $M$. Set $B\colonequals A/Af$ and let $\lambda_B\colon B\to \mco$ the induced map. Then $(B,\lambda_B)$ is in $\cato_{\mco}(c-1)$, and for the $B$-module $N\colonequals M/fM$ there is an equality
\[
\delta_{\lambda_B}(N) = \delta_{\lambda_A}(M) \,.
\]
\end{theorem}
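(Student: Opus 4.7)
The plan is to establish the two assertions in turn.

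The first, that $(B, \lambda_B) \in \cato_\mco(c-1)$, is a local computation. Writing $\mathfrak{q} \colonequals \fp/Af = \Ker \lambda_B$, I localize at $\fp$: since $A_\fp$ is regular local of dimension $c$, the hypothesis $f \in \fp \setminus \fp^{(2)}$ says precisely that $f$ extends to a minimal generating set of the maximal ideal of $A_\fp$, so $B_{\mathfrak{q}} = A_\fp/fA_\fp$ is regular of dimension $c-1$. Since $f$ is $M$-regular, the same localization shows that the rank of $N$ at $\lambda_B$ coincides with the rank $\mu$ of $M$ at $\lambda_A$, and $\depth_B N \ge c$, ensuring that the hypotheses of the congruence-module theory transfer from $(A, M)$ to $(B, N)$.

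For the defect equality, the strategy is to compare the two terms of \eqref{eq:defect} separately. On the conormal side, the first fundamental sequence for $\mco \to A \to B$ combined with \eqref{eq:kahler-different} yields a right exact sequence of $\mco$-modules
\[
\mco \xrightarrow{\;1 \mapsto \bar f\;} \fp/\fp^2 \longrightarrow \mathfrak{q}/\mathfrak{q}^2 \longrightarrow 0,
\]
which becomes left exact after inverting a uniformizer of $\mco$ thanks to $f \notin \fp^{(2)}$. Splitting $\fp/\fp^2 \cong \mco^c \oplus \tors(\fp/\fp^2)$ and applying Smith normal form to the image of $\bar f$ in the free summand produces a content $d \in \mco$ with
\[
\length_\mco \tors(\mathfrak{q}/\mathfrak{q}^2) - \length_\mco \tors(\fp/\fp^2) = \length_\mco(\mco/(d)).
\]
On the congruence side, one uses that $f$ is $M$-regular to invoke the change-of-rings comparison between $\ext^{c-1}_B(\mco, -)$ and $\ext^c_A(\mco, -)$ for $B$-modules, and combines it with the long exact Ext sequence induced by $0 \to M \xrightarrow{f} M \to N \to 0$ (together with its analogue for $M/\fp M = N/\mathfrak{q} N$) to build a commutative diagram linking $\eta_{\lambda_A}(M)$ with $\eta_{\lambda_B}(N)$. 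The depth hypothesis $\depth_A M \ge c+1$ keeps the relevant Ext modules torsion-free; a diagram chase should then yield
\[
\length_\mco \cmod_{\lambda_B}(N) - \length_\mco \cmod_{\lambda_A}(M) = \mu \cdot \length_\mco(\mco/(d)).
\]
Inserting the two comparisons into \eqref{eq:defect} causes the contributions $\mu \cdot \length_\mco(\mco/(d))$ to cancel, giving $\delta_{\lambda_B}(N) = \delta_{\lambda_A}(M)$.

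The main obstacle will be the diagram chase producing the second length formula: the torsion-free quotient in the definition of $\eta_\lambda$ is not an exact functor, and the boundary map in the change-of-rings long exact sequence is nontrivial in general, so both must be controlled in the range of degrees that matters here. A simplification I would try is to first invoke the defect formula \eqref{eq:defect-formula} to reduce to the universal case $M = A$, $N = B$, together with a separate verification that $\Coker \kappa_\lambda$ is preserved under reduction modulo $f$; the universal case can then be attacked by choosing a minimal generating set for $\fp$ in which $f$ appears as one of the generators, and reading off the needed identity from the Koszul dga on this set, in the spirit of the proof of Theorem~\ref{th:dlw}.
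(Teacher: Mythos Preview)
Your plan matches the paper's own argument: the proof sketch given there says exactly that both components of the defect in \eqref{eq:defect} change by the same quantity, namely the order of the class of $f$ in $\fp/\fp^2$---your ``content $d$''---and refers to \cite[\S8]{Iyengar/Khare/Manning:2024a} and \cite[\S2]{Iyengar/Khare/Manning/Urban:2024} for the details you outline. One caution: you invoke a depth hypothesis $\depth_A M \ge c+1$ that is not part of the stated theorem, so either check that your Ext comparison goes through without it or be explicit that you are proving the result under this additional assumption (which is, in any case, the one in force when the theorem is applied to prove Theorem~\ref{th:splitting}).
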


Here $\fp^{(2)}$ is the second symbolic power of $\fp$, namely $\fp^2A_\fp\cap A$. This theorem is proved by verifying that both components in the definition of the Wiles defect~\eqref{eq:defect} change by the same amount, namely the order of the class of $f$, in the conormal module of $\lambda_A$; see \cite[\S8]{Iyengar/Khare/Manning:2024a} and, for slightly different perspective, \cite[\S2]{Iyengar/Khare/Manning/Urban:2024}.

Theorem~\ref{th:splitting} is proved by an induction on $c$; the base case $c=0$ is precisely Theorem~\ref{th:diamond}. When $c\ge 1$, since $\depth_AM\ge c+1$, one can find an element $f$ as in Theorem~\ref{th:deformation} and deduce by induction that the ring $B$ is complete intersection, and that $N$ has a free summand of rank $\mu$. However, to propagate this information to $A$ and $M$, one needs to know that $f$ is also not a zerodivisor on $A$, and so, eventually, that $A$ has sufficient depth. However, as I mentioned earlier, in the number theoretic context, this is not always a reasonable hypothesis. To get around this, we argue by first replacing $A$ by its image in $\End_A(M)$. This step is a little delicate and requires an ``invariance of domain" property of congruence modules; see the proof of \cite[Theorem~7.4]{Iyengar/Khare/Manning:2024a}.

Theorem~\ref{th:splitting} vastly expands the reach of the patching method, for it can be applied after patching, and this leads to new results from number theory. One such is \cite[Theorem~F]{Iyengar/Khare/Manning:2024a}, which is proved under some restrictive hypotheses. In ongoing joint work with Diamond~\cite{Diamond/Iyengar/Khare/Manning:2025a} we expect to get significantly better and more complete results; these are discussed in \cite[3.2]{Iyengar/Khare/Manning:2025a}.

\subsection*{Infinitely generated modules.}
 In many applications, the module $M$ that appears is not finitely generated over $A$; one can restore finite generation by taking into account the action of a profinite group that acts naturally on $M$. At the very least, this calls for establishing analogues of Theorems~\ref{th:splitting} and \ref{th:deformation} to allow (certain types of) infinitely generated modules. Here is a result from an ongoing collaboration~\cite{Allen/Iyengar/Khare/Manning:2025a} with Allen, Khare, and Manning that gives an idea of what is needed; we use it to obtain new results on completed cohomology modules, in the sense of Emerton and Gee~\cite{Calegari/Emerton:2012}.

\begin{theorem}
        \label{th:bigdeal}
Fix $(A,\lambda)$ in $\cato_{\mco}(c)$, and suppose also that $A$ is Gorenstein. Let $M$ be an $\fm_A$-adically complete $A$-module with $\depth_AM =\dim A$. If the map $\kappa_\lambda(M)$ from \eqref{eq:kappaM} is surjective, then $M\cong F\oplus W$ where $F$ is flat and $\fp$ is not in the support of $W$. \qed
    \end{theorem}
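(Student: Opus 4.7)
The plan is to mirror the strategy for Theorem~\ref{th:splitting}: induct on the codimension $c$ by reducing modulo a regular element along the lines of Theorem~\ref{th:deformation}, and establish the base case $c=0$ separately. The Gorenstein hypothesis pins down $\ext^c_A(\mco,A)$ concretely: the quotient $A/\fp\cong\mco$ has dimension one, so $\dim A=c+1$, and local duality identifies $\ext^c_A(\mco,A)$ with the canonical module of the one-dimensional Cohen--Macaulay $A$-module $\mco$, which (as $\mco$ is a DVR, hence Gorenstein) is $\mco$ itself. Under this identification, surjectivity of $\kappa_\lambda(M)$ becomes surjectivity of the natural map $M/\fp M \to \tfree{\ext^c_A(\mco,M)}$, and this is the form of the hypothesis I would work with throughout.

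For the inductive step ($c\ge 1$), the Gorenstein hypothesis gives $\depth A=\dim A=c+1$, and a prime-avoidance argument produces an element $f\in\fp\setminus\fp^{(2)}$ that is regular on both $A$ and $M$. Setting $B\colonequals A/fA$ and $N\colonequals M/fM$, the pair $(B,\lambda_B)$ lies in $\cato_\mco(c-1)$, $B$ remains Gorenstein, and $N$ is $\fm_B$-adically complete with $\depth_B N=\dim B$. The crucial step is to descend surjectivity of $\kappa_\lambda(M)$ to surjectivity of $\kappa_{\lambda_B}(N)$; I would do this by combining the long exact Ext-sequence attached to $0\to M\xrightarrow{f} M\to N\to 0$---in which multiplication by $f$ on $\ext^*_A(\mco,-)$ is zero, since $f\in\fp$ annihilates $\mco$---with Rees's change-of-rings isomorphism $\ext^n_A(\mco,N)\cong\ext^{n-1}_B(\mco,N)$, which chases surjectivity through the short exact sequence $0\to\ext^c_A(\mco,M)\to\ext^{c-1}_B(\mco,N)\to\ext^{c+1}_A(\mco,M)\to 0$. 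Induction then delivers $N\cong F'\oplus W'$ with $F'$ flat over $B$ and $\fp_B$ not in the support of $W'$, and one lifts this splitting to $M$ using $\fm_A$-adic completeness: a lift $F\subseteq M$ of $F'$ built as an inverse limit along the $\fm_A^n$-filtration is flat over $A$ by the local criterion of flatness applied to the regular element $f$, and the complement $W$ arises as the kernel of a lifted projection.

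For the base case $c=0$, the ring $A$ is one-dimensional Gorenstein, $\fp$ is minimal, and $\mco\cong A/\fp$. The map $\kappa_\lambda(M)$ reduces to a surjection $M/\fp M\twoheadrightarrow \tfree{\Hom_A(\mco,M)}$, and Gorenstein duality identifies the target with an $\mco$-free direct summand of the $\fp$-torsion-free quotient of $M$. Lifting a topological basis of this target through the surjection and using $\fm_A$-adic completeness of $M$ produces the required flat summand $F\subseteq M$, with complement $W$ visibly unsupported at $\fp$ by construction.

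The hard part will be the lifting of the splitting in the presence of infinite generation. In the finitely generated proof of Theorem~\ref{th:splitting}, one has access to Nakayama's lemma and to the ``invariance of domain'' maneuver from \cite{Iyengar/Khare/Manning:2024a}, where one replaces $A$ by its image in $\End_A(M)$ to force sufficient depth on the ring. For $\fm_A$-adically complete $M$, both steps must be replaced by pseudo-compact, inverse-limit analogues, and one must verify along the way that \emph{flatness}, rather than freeness, is the invariant that propagates under the reductions---this is precisely where the Gorenstein hypothesis on $A$ earns its keep, by keeping the identification $\ext^c_A(\mco,A)\cong\mco$ and the accompanying duality robust through each quotient $A\rightsquigarrow B$.
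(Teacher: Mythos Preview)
Your plan is precisely the strategy the paper says \emph{does not go through}: immediately after the statement of Theorem~\ref{th:bigdeal} the authors write that they ``have not been able to verify the statement above by mimicking the argument for finitely generated modules'' and that the proof they found ``uses in an essential way the theory of derived completions initiated by Greenlees and May.'' So the induction-on-$c$ scheme modeled on Theorems~\ref{th:splitting} and \ref{th:deformation} is not the paper's argument, and the authors report that they could not make it work.

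The concrete gap is in the lifting step, which you flag as ``the hard part'' but then dispatch with a sentence. Given a decomposition $N=M/fM\cong F'\oplus W'$ with $F'$ flat over $B$, there is no mechanism to ``lift $F'$ as an inverse limit along the $\fm_A^n$-filtration'': $F'$ is only flat, not free, so there is no basis to lift, and the local criterion of flatness tells you when a \emph{given} $A$-module is flat, not how to manufacture one sitting inside $M$ with prescribed reduction. The same problem bites in your base case: you speak of a ``topological basis'' of $\tfree{\Hom_A(\mco,M)}$, but a torsion-free $\mco$-module need not be free once finite generation is dropped, so there is no basis to lift. The pseudo-compact analogues of Nakayama that you invoke let you test whether a map is surjective or an isomorphism; they do not, by themselves, produce submodules with prescribed properties.

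The paper's route is genuinely different: rather than descend and then lift a splitting, it works with derived completion (Greenlees--May duality between local cohomology and derived completion) to produce an \emph{explicit} splitting of $M$ directly, with the Gorenstein hypothesis controlling $\ext^c_A(\mco,A)$ as you correctly note. If you want to salvage an inductive argument, you would need a substantive result that lifts flat summands of $M/fM$ to flat summands of $M$ for $\fm_A$-complete big Cohen--Macaulay $M$; absent such a lemma, the derived-completion approach is the one to pursue.
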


An $A$-module $M$ with the property that $\depth_AM=\dim A$ is said to be a \emph{big Cohen-Macaulay module}.
It is well-understood that  completion is often a good substitute for finite generation; Nakayama's Lemma holds, for instance. However, we have not been able to verify the statement above by mimicking the argument for finitely generated modules given in \cite[Theorem~9.2]{Iyengar/Khare/Manning:2024a}. Instead, the proof we found uses in an essential way the theory of derived completions initiated by Greenlees and May~\cite{Greenlees/May:1992a}, and is more satisfactory than the one in \cite{Iyengar/Khare/Manning:2024a}, for it gives an explicit splitting of $M$. 

Number theory demands more (she \emph{is} the Queen of Mathematics): One would like the Gorenstein property (in fact, that the ring is complete intersection) as a conclusion of the numerical criterion, and not as part of the hypotheses; compare Theorem~\ref{th:splitting}. This leads to the next topic.

\subsection*{Wiles defect module.}
In Theorem~\ref{th:splitting} the hypothesis is only that $\delta_\lambda(M)=0$, that is to say that
\begin{equation}
\label{eq:delta=0}
\mu\cdot \length_{\mco} \tors(\fp/\fp^2) = \length_{\mco} \Psi_{\lambda}(M)\,.
\end{equation}
The statement is agnostic to the values of the two invariants above. Thus, if one can construct a defect module the criteria for freeness (and complete intersection) described above can be phrased in terms of the vanishing of the defect module, and hence is sensible even when $M$ and the defect module are not  finitely generated. Building on the discussion around Theorems~\ref{th:dlw} and \ref{th:tate-wiebe},  we have identified a potential candidate for the defect module: For $(A,\lambda)$ in $\cato_{\mco}(c)$ let $K$ be the Koszul complex on some finite generating set $a_1,\dots,a_n$ for the ideal $\Ker \lambda$. With $c$ the codimension of $\lambda$, the defect module of a module $M$ (not necessarily finitely generated) should be the cokernel of the natural map
\[
\bigwedge^{n-c}_{\mco}\hh_1(K)\otimes_{\mco} \hh_0(K\otimes_A M) \longrightarrow \hh_{n-c}(K\otimes_AM)
\]
induced by the action of $\hh_*(K)$ on $\hh_*(K\otimes_AM)$. This definition is also inspired by various criteria for detecting the complete intersection property of a local ring in terms of the multiplicative structure of the Koszul homology of the ring with respect to its maximal ideal, due to Assmus, Bruns, Tate, and Wiebe; see~ Theorem~\ref{th:tate-wiebe} and also \cite[\S2.3]{Bruns/Herzog:1998a}. We can verify, at least when both $A$ and $M$ have sufficient depth, that this defect module has the desired properties, and also that, when $M$ is in addition finitely generated, the length of the defect module is precisely $\delta_\lambda(M)$, the Wiles defect of $M$. Such a Koszul homology description of the defect module also makes it easier to compute the Wiles defect itself, in case $M$ is finitely generated. However, much remains to be done.

\section*{Acknowledgments.}
This work is partly supported by National Science Foundation grant DMS-2502004.  Wiles' work appeared when I was in graduate school, at Purdue University, West Lafayette. I vividly recall the excitement it generated, and a seminar (given by Craig Huneke) exposing the commutative algebra aspects of Wiles' work. I could not have dreamed that, one day, my own work would become intertwined with this thread of research; I have to thank Shekhar Khare for this. A chance conversation on the UCLA campus, around November 2019, aided by the long shutdown the following year, sparked the various projects that I am reporting here and in \cite{Iyengar/Khare/Manning:2025a}. Many thanks to him, and our collaborator Jeff Manning, for generously sharing their expertise and  inviting me along on this wonderful journey. 

I have been fortunate in my collaborations, and I would  like to take this opportunity to thank in  particular Dave Benson, Henning Krause, and Julia Pevtsova, and also Mark Walker, for the mathematical conversations we have been having, almost weekly, for years. A special thanks to Lucho Avramov, who was my Ph.\ D.\ advisor, and John Greenlees, who was my post-doc supervisor. They will  recognize many of the techniques, insights, and points of view, that go into this work, for I learned these from them.

\bibliographystyle{siamplain}

\begin{thebibliography}{10}

\bibitem{Allen/Iyengar/Khare/Manning:2025a}
{\sc P.~B. Allen, S.~B. Iyengar, C.~B. Khare, and J.~Manning}, {\em A
  {W}iles-{D}iamond type criterion for detecting flat summands of completed
  cohomology}.
\newblock In preparation, 2025.

\bibitem{Avramov:1998a}
{\sc L.~L. Avramov}, {\em {Infinite free resolutions}}, in Six lectures on
  commutative algebra, J.~Elias, J.~M. Giral, R.~M. Mir\'o-Roig, and
  S.~Zarzuela, eds., vol.~166 of Progr. in Math., Birkh\"auser Verlag, Basel,
  1998.

\bibitem{Avramov:1999a}
{\sc L.~L. Avramov}, {\em {Locally complete intersection homomorphisms and a
  conjecture of Quillen on the vanishing of cotangent homology}}, Ann.\ of
  Math., 150 (1999), pp.~455--487.

\bibitem{Berger/Kiehl/Kunz/Nastold:1967}
{\sc R.~Berger, R.~Kiehl, E.~Kunz, and H.-J. Nastold}, {\em
  Differentialrechnung in der analytischen {G}eometrie}, vol.~No. 38 of Lecture
  Notes in Mathematics, Springer-Verlag, Berlin-New York, 1967.

\bibitem{Brochard:2017}
{\sc S.~Brochard}, {\em Proof of de {S}mit's conjecture: a freeness criterion},
  Compos. Math., 153 (2017), pp.~2310--2317,
  \url{https://doi.org/10.1112/S0010437X17007370}.

\bibitem{Brochard/Iyengar/Khare:2023a}
{\sc S.~Brochard, S.~B. Iyengar, and C.~B. Khare}, {\em A freeness criterion
  without patching for modules over local rings}, J. Inst. Math. Jussieu, 22
  (2023), pp.~2117--2129,
  \url{https://doi.org/10.1017/S147474802100061X}.

\bibitem{Brochard/Iyengar/Khare:2025}
{\sc S.~Brochard, S.~B. Iyengar, and C.~B. Khare}, {\em A freeness criterion
  for complexes with derived actions}, 2025,
  \url{https://arxiv.org/abs/2502.12868}.

\bibitem{Bruns/Herzog:1998a}
{\sc W.~Bruns and J.~Herzog}, {\em {Cohen--Macaulay rings}}, vol.~39 of
  Cambridge Studies in Advanced Mathematics, Cambridge University Press, 1998.
\newblock Revised edition.

\bibitem{Buchsbaum/Eisenbud:1977}
{\sc D.~A. Buchsbaum and D.~Eisenbud}, {\em Algebra structures for finite free
  resolutions, and some structure theorems for ideals of codimension {$3$}},
  Amer. J. Math., 99 (1977), pp.~447--485,
   \url{https://doi.org/10.2307/2373926}.

\bibitem{Calegari/Emerton:2012}
{\sc F.~Calegari and M.~Emerton}, {\em Completed cohomology---a survey}, in
  Non-abelian fundamental groups and {I}wasawa theory, vol.~393 of London Math.
  Soc. Lecture Note Ser., Cambridge Univ. Press, Cambridge, 2012, pp.~239--257.

\bibitem{Calegari/Geraghty:2018}
{\sc F.~Calegari and D.~Geraghty}, {\em Modularity lifting beyond the
  {T}aylor-{W}iles method}, Invent. Math., 211 (2018), pp.~297--433,
  \url{https://doi.org/10.1007/s00222-017-0749-x}.

\bibitem{Cartan/Eilenberg:1956a}
{\sc H.~Cartan and S.~Eilenberg}, {\em {Homological algebra}}, no.~19 in
  Princeton Mathematical Series, Princeton Univ.\ Press, 1956.

\bibitem{Cornell/Silverman/Stevens:1997}
{\sc G.~Cornell, J.~H. Silverman, and G.~Stevens}, eds., {\em Modular forms and
  {F}ermat's last theorem}, Springer-Verlag, New York, 1997,
  \url{https://doi.org/10.1007/978-1-4612-1974-3}.

\newblock Papers from the Instructional Conference on Number Theory and
  Arithmetic Geometry held at Boston University, Boston, MA, August 9--18,
  1995.

\bibitem{Darmon/Diamond/Taylor:1997}
{\sc H.~Darmon, F.~Diamond, and R.~Taylor}, {\em Fermat's last theorem}, in
  Elliptic curves, modular forms \& {F}ermat's last theorem ({H}ong {K}ong,
  1993), Int. Press, Cambridge, MA, 1997, pp.~2--140.

\bibitem{Diamond:1997}
{\sc F.~Diamond}, {\em The {T}aylor-{W}iles construction and multiplicity one},
  Invent. Math., 128 (1997), pp.~379--391,
  \url{http://dx.doi.org/10.1007/s002220050144}.

\bibitem{Diamond/Iyengar/Khare/Manning:2025a}
{\sc F.~Diamond, S.~B. Iyengar, C.~B. Khare, and J.~Manning}, {\em Modularity
  lifting and a torsion {J}acquet-{L}anglands correspondence in weight one}.
\newblock In preparation, 2025.

\bibitem{Fakhruddin/Khare/Ramakrishna:2021}
{\sc N.~Fakhruddin, C.~Khare, and R.~Ramakrishna}, {\em Quantitative level
  lowering for {G}alois representations}, J. Lond. Math. Soc. (2), 103 (2021),
  pp.~250--287, \url{https://doi.org/10.1112/jlms.12373}.

\bibitem{Greenlees/May:1992a}
{\sc J.~P.~C. Greenlees and J.~P. May}, {\em {Derived functors of $l$-adic
  completion and local homology}}, J.~Algebra, 149 (1992), pp.~438--453.

\bibitem{Iyengar/Khare/Manning:2024a}
{\sc S.~B. Iyengar, C.~B. Khare, and J.~Manning}, {\em Congruence modules and
  the {W}iles-{L}enstra-{D}iamond numerical criterion in higher codimensions},
  Invent. Math., 238 (2024), pp.~769--864, \url{https://doi.org/10.1007/s00222-024-01292-y}.

\bibitem{Iyengar/Khare/Manning:2025a}
{\sc S.~B. Iyengar, C.~B. Khare, and J.~Manning}, {\em The commutative algebra
  of congruence ideals and applications to number theory}, 2025, \url{https://arxiv.org/abs/2510.05418}.

\bibitem{Iyengar/Khare/Manning/Urban:2024}
{\sc S.~B. Iyengar, C.~B. Khare, J.~Manning, and E.~Urban}, {\em Congruence
  modules in higher codimension and zeta lines in {G}alois cohomology}, Proc.
  Natl. Acad. Sci. USA, 121 (2024), pp.~Paper No. e2320608121, 13.

\bibitem{Iyengar/Letz/Liu/Pollitz:2022}
{\sc S.~B. Iyengar, J.~C. Letz, J.~Liu, and J.~Pollitz}, {\em Exceptional
  complete intersection maps of local rings}, Pacific J. Math., 318 (2022),
  pp.~275--293,   \url{https://doi.org/10.2140/pjm.2022.318.275}.

\bibitem{Khare/Thorne:2017}
{\sc C.~B. Khare and J.~A. Thorne}, {\em Potential automorphy and the
  {L}eopoldt conjecture}, Amer. J. Math., 139 (2017), pp.~1205--1273,
 \url{https://doi.org/10.1353/ajm.2017.0030}.

\bibitem{Kisin:2009a}
{\sc M.~Kisin}, {\em Moduli of finite flat group schemes, and modularity}, Ann.
  of Math. (2), 170 (2009), pp.~1085--1180,
  \url{http://dx.doi.org/10.4007/annals.2009.170.1085}.

\bibitem{Kunz:1986}
{\sc E.~Kunz}, {\em K\"ahler differentials}, Advanced Lectures in Mathematics,
  Friedr. Vieweg \& Sohn, Braunschweig, 1986,
  \url{https://doi.org/10.1007/978-3-663-14074-0}.

\bibitem{Lenstra:1995}
{\sc H.~W. Lenstra, Jr.}, {\em Complete intersections and {G}orenstein rings},
  in Elliptic curves, modular forms, \& {F}ermat's last theorem ({H}ong {K}ong,
  1993), Ser. Number Theory, I, Int. Press, Cambridge, MA, 1995, pp.~99--109.

\bibitem{Lescot:1983}
{\sc J.~Lescot}, {\em La s\'erie de {B}ass d'un produit fibr\'e{} d'anneaux
  locaux}, in Paul {D}ubreil and {M}arie-{P}aule {M}alliavin algebra seminar,
  35th year ({P}aris, 1982), vol.~1029 of Lecture Notes in Math., Springer,
  Berlin, 1983, pp.~218--239, 
  \url{https://doi.org/10.1007/BFb0098933}.

\bibitem{Mazur:1997}
{\sc B.~Mazur}, {\em An introduction to the deformation theory of {G}alois
  representations}, in Modular forms and {F}ermat's last theorem ({B}oston,
  {MA}, 1995), Springer, New York, 1997, pp.~243--311.

\bibitem{Mazur/Roberts:1969}
{\sc B.~Mazur and L.~Roberts}, {\em Local {E}uler characteristics}, Invent.
  Math., 9 (1969/70), pp.~201--234, 
  \url{https://doi.org/10.1007/BF01404325}.

\bibitem{Nagata:1962a}
{\sc M.~Nagata}, {\em {Local rings}}, Wiley, New York, 1962.

\bibitem{Roberts:1991}
{\sc P.~C. Roberts}, {\em Intersection theory and the homological conjectures
  in commutative algebra}, in Proceedings of the {I}nternational {C}ongress of
  {M}athematicians, {V}ol.\ {I}, {II} ({K}yoto, 1990), Math. Soc. Japan, Tokyo,
  1991, pp.~361--368.

\bibitem{Tate:1957a}
{\sc J.~Tate}, {\em {Homology of Noetherian rings and local rings}}, Illinois
  J.\ Math., 1 (1957), pp.~14--27.

\bibitem{Taylor/Wiles:1995}
{\sc R.~Taylor and A.~Wiles}, {\em Ring-theoretic properties of certain {H}ecke
  algebras}, Ann. of Math. (2), 141 (1995), pp.~553--572,
  \url{http://dx.doi.org/10.2307/2118560}.

\bibitem{VandeBogert/Walker:2025}
{\sc K.~VandeBogert and M.~E. Walker}, {\em The total rank conjecture in
  characteristic 2}, Duke Math. J., 174 (2025), pp.~287--312,
  \url{https://doi.org/10.1215/00127094-2024-0027}.

\bibitem{Walker:2017}
{\sc M.~E. Walker}, {\em Total {B}etti numbers of modules of finite projective
  dimension}, Ann. of Math. (2), 186 (2017), pp.~641--646,
  \url{https://doi.org/10.4007/annals.2017.186.2.6}.

\bibitem{Wiles:1995}
{\sc A.~Wiles}, {\em Modular elliptic curves and {F}ermat's last theorem}, Ann.
  of Math. (2), 141 (1995), pp.~443--551,
  \url{http://dx.doi.org/10.2307/2118559}.

\end{thebibliography}
\newcommand{\noopsort}[1]{}

\end{document}